\newtheorem{theorem}{Theorem}[section]
\newtheorem{corollary}[theorem]{Corollary}
\newtheorem{remark}[theorem]{Remark}
\title [Some results for the Apostol-Genocchi polynomials]{Some results for the Apostol-Genocchi polynomials of higher order}
\subjclass[2000]{11B68, 05A10, 05A15}
\author{$^{1*}$Hassan Jolany and $^{\dag}$Hesam Sharifi} 
\begin{document}
\maketitle
\begin{center}
\footnote{corresponding Author}*School of Mathematics, Statistics
and Computer Science, University of Tehran, Iran. E-mail:
hassan.jolany@khayam.ut.ac.ir
\end{center}

\begin{center}
$^{\dag}$Department of Mathematics, Faculty of Science,
University of Shahed, Tehran, Iran. E-mail: hsharifi@shahed.ac.ir
\end{center}




\date{}
\begin{abstract}
The present paper deals with multiplication formulas for the
Apostol-Genocchi polynomials of higher order and deduces some
explicit recursive formulas. Some earlier results of Carlitz and
Howard in terms of Genocchi numbers can be deduced. We introduce
the 2-variable Apostol-Genocchi polynomials and then we consider
the multiplication theorem for 2-variable Genocchi polynomials.
Also we introduce generalized Apostol-Genocchi polynomials with
$a,b,c$ parameters and we obtain several identities on
generalized Apostol-Genocchi polynomials with $a,b,c$ parameters .
\\

\emph{Keywords and Phrases}:{Apostol-Genocchi numbers and
polynomials (of higher order), Generalization of Genocchi numbers
and polynomials, Raabe's multiplication formula, multiplication
formula, Bernoulli numbers and polynomials, Euler numbers and
polynomials, Stirling numbers }
\end{abstract}
\section{Preliminaries and motivation}

The classical Genocchi numbers are defined in a number of ways.
The way in which it is defined is often determined by which sorts
of applications they are intended to be used for. The Genocchi
numbers have wide-ranging applications from number theory and
Combinatorics to numerical analysis and other fields of applied
mathematics. There exist two important definitions of the
Genocchi numbers: the generating function definition, which is
the most commonly used definition, and a Pascal-type triangle
definition, first given by Philip Ludwig von Seidel, and
discussed in [29]. As such, it makes it very appealing for use in
combinatorial applications. The idea behind this definition, as
in Pascal's triangle, is to utilize a recursive relationship
given some initial conditions to generate the Genocchi numbers.
The combinatorics of the Genocchi numbers were developed by Dumont
in [4] and various co-authors in the 70s and 80s. Dumont and
Foata introduced in 1976 a three-variable symmetric refinement of
Genocchi numbers, which satisfies a simple recurrence relation. A
six-variable generalization with many similar properties was
later considered by Dumont.  In [30] Jang et al. defined a new
generalization of Genocchi numbers, poly Genocchi numbers. Kim in
[10] gave a new concept for the q-extension of Genocchi numbers
and gave some relations between q-Genocchi polynomials and q-Euler
numbers. In [31], Simsek et al. investigated the q-Genocchi zeta
function and L-function by using generating functions and Mellin
transformation. Genocchi numbers are known to count a large
variety of combinatorial objects, among which numerous sets of
permutations. One of the applications of Genocchi numbers that was
investigated by Jeff Remmel in [32] is counting the number of
up-down ascent sequences. Another application of Genocchi numbers
is in Graph Theory. For instance, Boolean numbers of the
associated Ferrers Graphs are the Genocchi numbers of the second
kind [33]. A third application of Genocchi numbers is in Automata
Theory. One of the generalizations of Genocchi numbers that was
first proposed by Han in [34] proves useful in enumerating the
class of deterministic finite automata (DFA) that accept a finite
language and in enumerating a generalization of permutations
counted by Dumont. Recently S. Herrmann in [6], presented a
relation between the $f$ -vector of the boundary and the interior
of a simplicial ball directly in terms of the $f$-vectors. The
most interesting point about this equation is the occurrence of
the Genocchi numbers $G_{2n}$. In the last decade, a surprising
number of papers appeared proposing new generalizations of the
classical Genocchi polynomials to real and complex variables or
treating other topics related to Genocchi polynomials. Qiu-Ming
Luo in [19] introduced new generalizations of Genocchi
polynomials, he defined the Apostol-Genocchi polynomials of
higher order and q-Apostol-Genocchi polynomials and he obtained a
relationship between Apostol-Genocchi polynomials of higher order
and Goyal-Laddha-Hurwitz-Lerch Zeta function. Next Qiu-Ming Luo
and H.M. Srivastava in [35] by Apostol-Genocchi polynomials of
higher order derived various explicit series representations in
terms of the Gaussian hypergeometric function and the Hurwitz (or
generalized) zeta function which yields a deeper insight into the
effectiveness of this type of generalization. Also it is clear
that Apostol-Genocchi polynomials of higher order are in a class
of orthogonal polynomials and we know that most such special
functions that are orthogonal are satisfied in multiplication
theorem, so in this present paper we show this property is true
for Apostol-Genocchi polynomials of higher order.

The study of Genocchi numbers and their combinatorial relations
has received much attention
\cite{Ca,Du,He,Ki1,Li2,Lu5,Ri1,Ri2,Ry2,Sim2,Zh,Zq}. In this paper
we consider some combinatorial relationships of the
Apostol-Genocchi numbers of higher order.

 The unsigned Genocchi numbers $\{G_{2n}
\}_{n\geqslant 1}$ can be defined through their generating
function:

\begin{equation*}
\sum_{n=1}^{\infty} G_{2n}
\frac{x^{2n}}{(2n)!}=x.\tan\Big(\frac{x}{2}\Big)
\end{equation*}
and also
$$\sum_{n\geqslant 1}(-1)^{n}G_{2n}\frac{t^{2n}}{(2n)!}=-t\tanh \Big(\frac{t}{2}\Big)  $$
So, by simple computation
\begin{eqnarray*}
\tanh \Big(\frac{t}{2}\Big)&=& \sum_{s\geqslant 0}\frac{(\frac{t}{2})^{2s+1}}{(2s+1)!}.\sum_{m\geqslant 0}(-1)^{m}E_{2m}\frac{(\frac{t}{2})^{2m}}{(2m)!}\\
&=&\sum_{s,m\geqslant 0}\frac{(-1)^{m}}{2^{2m+2s+1}}\frac{E_{2m}t^{2m+2s+1}}{(2m)!(2s+1)!}\\
&=&\sum_{n\geqslant 1}\sum_{m=0}^{n-1}{2n-1 \choose 2m}\frac{(-1)^{m}E_{2m}t^{2n-1}}{2^{2n-1}(2n-1)!},
\end{eqnarray*}
we obtain for $n\geqslant 1$,
$$G_{2n}=\sum_{k=0}^{n-1}(-1)^{n-k-1}(n-k){2n \choose 2k}\frac{E_{2k}}{2^{2n-2}} $$
where $E_{k}$ are Euler numbers. Also the Genocchi numbers $G_{n}$ are defined by the generating function
\begin{equation*}
G(t)=\frac{2t}{e^{t}+1}=\sum_{n=0}^{\infty}G_{n}\frac{t^{n}}{n!}, (|t|< \pi).
\end{equation*}
In general, it satisfies $G_{0}=0,  G_{1}=1,
G_{3}=G_{5}=G_{7}=...G_{2n+1}=0$, and even coefficients are given
$G_{2n}=2(1-2^{2n})B_{2n}=2nE_{2n-1}$, where $B_{n}$ are
Bernoulli numbers and $E_{n}$ are Euler numbers. The first few
Genocchi numbers for even integers are -1,1,-3,17,-155,2073,... .
The first few prime Genocchi numbers are -3 and 17, which occur
at n=6 and 8. There are no others with $n<10^{5}$. For
$x\in\mathbb{R}$, we consider the Genocchi polynomials as follows
\begin{equation*}
G(x,t)=G(t)e^{xt}=\frac{2t}{e^{t}+1}e^{xt}=\sum_{n=0}^{\infty}G_{n}(x)\frac{t^{n}}{n!}.
\end{equation*}
In special case $x=0$, we define $G_{n}(0)=G_{n}$. Because we have
\begin{equation*}
G_{n}(x)=\sum_{k=0}^{n}{n \choose k} G_{k}x^{n-k}.
\end{equation*}
It is easy to deduce that $G_{k}(x)$ are polynomials of degree $k$. Here, we present some of the first Genocchi's polynomials:
$$G_{1}(x)=1,  G_{2}(x)=2x-1, G_{3}(x)=3x^{2}-3x, G_{4}(x)=4x^{3}-6x^{2}+1,   $$

$$G_{5}(x)=5x^{4}-10x^{3}+5x , G_{6}(x)=6x^{5}-15x^{4}+15x^{2}-3, \ ...  $$
The classical Bernoulli polynomials (of higher order)
$B_{n}^{(\alpha)}(x)$ and Euler polynomials (of higher order)
$E_{n}^{(\alpha)}(x), (\alpha\in\mathbb{C})$, are usually defined
by means of the following generating functions
\cite{Ki2,Li2,Lu1,Mc,Ru,Ry1,Sr}.
\begin{equation*}
\Big(\frac{z}{e^{z}-1}\Big)^{\alpha}
e^{xz}=\sum_{n=0}^{\infty}B_{n}^{(\alpha)}(x)\frac{z^{n}}{n!},
(|z| < 2 \pi )
\end{equation*}
and
\begin{equation*}
\Big(\frac{2}{e^{z}+1}\Big)^{\alpha}e^{xz}=\sum_{n=0}^{\infty}E_{n}^{(\alpha)}(x)\frac{z^{n}}{n!},
(|z| < \pi).
\end{equation*}
So that, obviously,
\begin{equation*}
B_{n}(x):=B_{n}^{1}(x) \ \ \ \text{and} \ \ \ E_{n}(x):= E_{n}^{(1)}(x).
\end{equation*}
In 2002, Q. M. Luo and et al. (see \cite{Gu,Lu3,Lu4}) defined the
generalization of Bernoulli polynomials and Euler numbers, as
follows
\begin{equation*}
\frac{tc^{xt}}{b^{t}-a^{t}}=\sum_{n=0}^{\infty}\frac{B_{n}(x;a,b,c)}{n!}t^{n},
        (| t\ln \frac{b}{a}|<2 \pi)
\end{equation*}
\begin{equation*}
\frac{2}{b^{t}+a^{t}}=\sum_{n=0}^{\infty}E_{n}(a,b)\frac{t^{n}}{n!}
, (| t\ln \frac{b}{a}|< \pi).
\end{equation*}
Here, we give an analogous definition for generalized Apostol-Genocchi polynomials.

Let $a, b> 0$, The Generalized Apostol-Genocchi Numbers and
Apostol-Genocchi polynomials with $a, b, c$ parameters are defined
by
\begin{equation*}
\frac{2t}{\lambda b^{t}+a^{t}}= \sum_{n=0}^{\infty}G_{n}(a, b;\lambda)\frac{t^{n}}{n!}
\end{equation*}
\begin{equation*}
\frac{2t}{\lambda b^{t}+a^{t}}e^{xt}= \sum_{n=0}^{\infty}G_{n}(x, a, b;\lambda)\frac{t^{n}}{n!}
\end{equation*}
\begin{equation*}
\frac{2t}{\lambda b^{t}+a^{t}}c^{xt}= \sum_{n=0}^{\infty}G_{n}(x, a, b,c;\lambda)\frac{t^{n}}{n!}
\end{equation*}
respectively.

For a real or complex parameter $\alpha$, The Apostol-Genocchi
polynomials with $a, b, c$ parameters of order $\alpha$,
$G_{n}^{(\alpha)}(x; a, b; \lambda)$, each of degree $n$ is $x$
as well as in $\alpha$, are defined by the following generating
functions
\begin{equation*}
\Big(\frac{2t}{\lambda
b^{t}+a^{t}}\Big)^{\alpha}e^{xz}=\sum_{n=0}^{\infty}G_{n}^{(\alpha)}(x,
a, b;\lambda)\frac{t^{n}}{n!}.
\end{equation*}
Clearly, we have $G_{n}^{(1)}(x, a, b; \lambda)=G_{n}(x; a, b; \lambda)$.

Now, we introduce the 2-variable Apostol-Genocchi Polynomials and
then we consider the multiplication theorem for 2-variable
Apostol-Genocchi Polynomials.

Now, we start with the definition of Apostol-Genocchi Polynomials $G_{n}(x ; \lambda)$. The Apostol-Genocchi Polynomials $G_{n}(x; \lambda)$ in variable $x$ are defined by means of the generating function
\begin{equation*}
\frac{2ze^{xz}}{\lambda  e^{z}+1}=\sum_{n=0}^{\infty}G_{n}(x; \lambda)\frac{z^{n}}{n!}\ \ \ (|z|< 2\pi \ when \ \lambda =1, |z|< |\log\lambda| \ when \ \lambda \neq 1).
\end{equation*}
with, of course
\begin{equation*}
G_{n}(\lambda):= G_{n}(0; \lambda)
\end{equation*}
Where $G_{n}(\lambda)$ denotes the so-called Apostol-Genocchi numbers.

Also (see \cite{Ap,Li3,Lu2,Lu5,Lu6,Ru,Sr}) Apostol-Genocchi Polynomials $G_{n}^{(\alpha)}(x; \lambda)$ of order $\alpha$ in variable $x$ are defined by means of the generating function:
\begin{equation*}
\Big(\frac{2z}{\lambda e^{z}+1}\Big)^{\alpha}
e^{xz}=\sum_{n=0}^{\infty}G_{n}^{(\alpha )}(x;
\lambda)\frac{z^{n}}{n!}
\end{equation*}
 with, of course $G_{n}^{(\alpha)}(\lambda):= G_{n}^{\alpha}(0; \lambda)$.

 Where $G_{n}^{\alpha }(\lambda)$ denotes the so-called Apostol-Genocchi numbers of higher order.
  If we set
 $$\phi(x,t;\alpha)=\Big(\frac{2t}{e^{t}+1}\Big)^{\alpha}e^{xt} $$
 Then
 $$\frac{\partial\phi}{\partial x}=t\phi $$
 and
 $$t\frac{\partial\phi}{\partial t}-\Big\{\frac{\alpha+tx}{t}-\frac{\alpha
e^{t}}{e^{t}+1}\Big\}\frac{\partial\phi}{\partial x}=0. $$

Next, we introduce the class of Apostol-Genocchi numbers as
follows. (for more information see [38])
\begin{equation*}
_{H}G_{n}(\lambda)=\sum_{s=0}^{[\frac{n}{2}]}\frac{n!G_{n-2s}(\lambda)G_{s}(\lambda)}{s!(n-2s)!}
\end{equation*}
The generating function of $_{H}G_{n}(\lambda)$ is provided by
\begin{equation*}
\frac{4t^{3}}{(\lambda e^{t}+1) (\lambda e^{t^{2}}+1)}=\sum_{n=0}^{\infty}\ _{H}G_{n}(\lambda )\frac{t^{n}}{n!}
\end{equation*}
and the generalization of $_{H}G_{n}(\lambda)$ for $(a,b)\neq 0$, is
\begin{equation*}
\frac{4t^{3}}{(\lambda e^{at}+1) (\lambda e^{bt^{2}}+1)}=\sum_{n=0}^{\infty}\ _{H}G_{n}(a, b;\lambda )\frac{t^{n}}{n!}
\end{equation*}
where
\begin{equation*}
_{H}G_{n}(a, b;\lambda)=\frac{1}{ab}\sum_{n=0}^{[\frac{n}{2}]}\frac{n!a^{n-2s}b^{s}G_{n-2s}(\lambda)G_{s}(\lambda)}{s!(n-2s)!}
\end{equation*}
The main object of the present paper is to investigate the multiplication formulas for the Apostol-type polynomials.

Luo in \cite{Lu2} defined the multiple alternating sums as
\begin{equation*}
Z_{k}^{(l)}(m;\lambda)=(-1)^{l}\sum_{^{0\leq v_{1},v_{2},...,v_{m}\leq l}_{v_{1}+v_{2}+...+v_{m}=\ell}}{l \choose v_{1},v_{2},...,v_{m}}(-\lambda)^{v_{1}+2v_{2}+...+mv_{m}}
\end{equation*}
\begin{equation*}
Z_{k}(m;\lambda)=\sum_{j=1}^{m}(-1)^{j+1}\lambda^{j}j^{k}=\lambda -\lambda^{2}2^{k}+...+(-1)^{m+1}\lambda^{m}m^{k}
\end{equation*}
\begin{equation*}
Z_{k}(m)=\sum_{j=1}^{m}(-1)^{j+1}j^{k}=1-2^{k}+...+(-1)^{m+1}m^{k}, \ (m,k,l\in\mathbb{N}_{0}; \lambda\in\mathbb{C})
\end{equation*}
where $\mathbb{N}_{0}:=\mathbb{N}\cup\{0\} \  , \ (\mathbb{N}:=\{1,2,3,... \}). $

\section{the multiplication formulas for the apostol-genocchi polynomials of higher order}

In this Section, we obtain some interesting new relations and
properties associated with Apostol-Genocchi polynomials of higher
order and then derive several elementary properties including
recurrence relations for Genocchi numbers. First of all we prove
the multiplication theorem of these polynomials

\begin{theorem}\label{t1}
For $m\in\mathbb{N}$, $n\in\mathbb{N}_{0}$, $\alpha, \lambda\in\mathbb{C} $, the following multiplication formula of the Apostol-Genocchi polynomials of higher order holds true:
\begin{equation}\label{8}
G_{n}^{(\alpha)}(mx;\lambda)=m^{n-\alpha}\sum_{v_{1},v_{2},...,v_{m-1}\geq
0}{\alpha \choose
v_{1},v_{2},...,v_{m-1}}(-\lambda)^{r}G_{n}^{(\alpha)}\Big(x+\frac{r}{m};\lambda^{m}\Big)
\end{equation}
where $r=v_{1}+2v_{2}+...+(m-1)v_{m-1}$, ($m$ is odd )
\end{theorem}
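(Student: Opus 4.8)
The plan is to work directly with the generating function and expand the key denominator using a geometric-series–style factorization. Recall that $G_n^{(\alpha)}(x;\lambda)$ is defined by
\[
\Big(\frac{2z}{\lambda e^{z}+1}\Big)^{\alpha} e^{xz}=\sum_{n=0}^{\infty}G_{n}^{(\alpha)}(x;\lambda)\frac{z^{n}}{n!}.
\]
I would start from the left-hand side, i.e. from the generating function $\sum_n G_n^{(\alpha)}(mx;\lambda)\,z^n/n!=\big(2z/(\lambda e^{z}+1)\big)^{\alpha}e^{mxz}$, and replace $z$ by $z/m$ so that the exponential becomes $e^{xz}$; the factor $2z/(\lambda e^z+1)$ then contributes $(2z/m)^{\alpha}/(\lambda e^{z/m}+1)^{\alpha}$, accounting for the $m^{n-\alpha}$ prefactor once we recollect coefficients of $z^n/n!$. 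So the whole identity reduces to the algebraic claim
\[
\frac{1}{(\lambda e^{z/m}+1)^{\alpha}}
=\frac{1}{m^{\alpha}}\sum_{v_1,\dots,v_{m-1}\ge 0}\binom{\alpha}{v_1,\dots,v_{m-1}}(-\lambda)^{r}\,\frac{m^{\alpha}}{(\lambda^{m}e^{z}+1)^{\alpha}}\cdot(\text{powers of }e^{z/m}),
\]
which really comes down to factoring $\lambda^m e^z+1$ (with $w=e^{z/m}$, this is $\lambda^m w^m+1$) in terms of $\lambda w+1$.

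The crucial step is the factorization. For $m$ odd, the polynomial $\lambda^{m}w^{m}+1$ in $w$ factors over the $m$-th roots of $-1$; equivalently, writing $u=-\lambda w$, we use
\[
\frac{1}{1+\lambda w}=\frac{1+(\lambda w)+\cdots+(\lambda w)^{m-1}}{1+(\lambda w)^{m}}\cdot(-1)^{?}
\]
— more precisely, since $m$ is odd, $1-(-\lambda w)^{m}=1+(\lambda w)^m$ and $1-(-\lambda w)^{m}=(1+\lambda w)\sum_{j=0}^{m-1}(-\lambda w)^{j}$, so
\[
\frac{1}{\lambda w+1}=\frac{\sum_{j=0}^{m-1}(-\lambda w)^{j}}{\lambda^{m}w^{m}+1}.
\]
Raising this to the power $\alpha$ and invoking the multinomial theorem for $\big(\sum_{j=0}^{m-1}(-\lambda w)^{j}\big)^{\alpha}=\big((-\lambda w)^{0}+\cdots+(-\lambda w)^{m-1}\big)^{\alpha}$ produces exactly the multinomial coefficients $\binom{\alpha}{v_0,v_1,\dots,v_{m-1}}$ with $(-\lambda w)^{0v_0+1v_1+\cdots+(m-1)v_{m-1}}$. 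After relabeling (the index $v_0$ is redundant once $\alpha$ is fixed, so only $v_1,\dots,v_{m-1}$ are free as in the statement), collecting the factor $w^{r}=e^{(r/m)z}$ with $r=v_1+2v_2+\cdots+(m-1)v_{m-1}$, and noting $(-\lambda)^{r}$ splits off cleanly, each summand becomes $\big(2z/(\lambda^{m}e^{z}+1)\big)^{\alpha}\cdot m^{-\alpha}z^{\alpha}\cdot e^{(x+r/m)z}(-\lambda)^{r}$, i.e. the generating function of $m^{-\alpha}(-\lambda)^{r}G_n^{(\alpha)}(x+r/m;\lambda^{m})$ up to the $z^{n}/n!$ bookkeeping.

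I would finish by equating coefficients of $z^{n}/n!$ on both sides, which converts the generating-function identity into \eqref{8}, with the $m^{n-\alpha}$ emerging from the $z\mapsto z/m$ substitution (each $z^{n}$ picks up $m^{-n}$, against $z^{\alpha}$ giving $m^{\alpha}$ inside, net $m^{n-\alpha}$ after moving it to the right side). The main obstacle is purely the algebra of keeping the substitution $z\mapsto z/m$ and the multinomial bookkeeping consistent — in particular verifying that the hypothesis ``$m$ odd'' is exactly what makes $1-(-\lambda w)^{m}=1+(\lambda w)^{m}=\lambda^m w^m+1$, so that the denominator on the right is the Apostol-Genocchi denominator with parameter $\lambda^{m}$ and not $-\lambda^{m}$; for even $m$ one would instead get $1-\lambda^{m}w^{m}$ in the denominator, which is why the theorem is stated only for odd $m$. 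No convergence subtleties arise since everything is a formal identity in $z$ (or valid in a punctured neighborhood of $0$), and the binomial/multinomial series for non-integer $\alpha$ is handled formally in the usual way.
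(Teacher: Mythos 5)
Your proposal is correct and follows essentially the same route as the paper: both rest on the factorization $\frac{1}{\lambda e^{t}+1}=\frac{\sum_{k=0}^{m-1}(-\lambda e^{t})^{k}}{\lambda^{m}e^{mt}+1}$ (valid precisely because $m$ is odd), raise it to the power $\alpha$ via the multinomial expansion, and read off coefficients of $t^{n}/n!$. The only difference is cosmetic bookkeeping — you substitute $z\mapsto z/m$ up front, while the paper keeps $t$ and evaluates the $\lambda^{m}$-generating function at $mt$ at the end to produce the factor $m^{n-\alpha}$ — and your explicit remark on why oddness of $m$ yields the denominator $\lambda^{m}e^{mt}+1$ rather than $\lambda^{m}e^{mt}-1$ is a point the paper leaves implicit.
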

\begin{proof}
It is easy to observe that
\begin{equation*}
\frac{1}{\lambda e^{t}+1}=- \frac{1-\lambda
e^{t}+\lambda^{2}e^{2t}+...+(-\lambda)^{m-1}e^{(m-1)t}}{(-\lambda)^{m}
e^{mt}-1}.   \qquad (\ast)
\end{equation*}
But we have, if $x_{i}\in\mathbb{C}$
\begin{equation*}
(x_{1}+x_{2}+...+x_{m})^{n}=\sum_{^{a_{1},a_{2},...,a_{m}\geqslant
0}_{a_{1}+a_{2}+...a_{m}=n}}{n \choose a_{1},a_{2},...,a_{m}
}x_{1}^{a_{1}}x_{2}^{a_{2}}...x_{m}^{a_{m}}.\qquad (\ast \ast)
\end{equation*}
The last summation takes place over all positive or zero integers $a_{i}\geqslant 0$ such that $a_{1}+a_{2}+...+a_{m}=n$, where
\begin{equation*}
{n \choose a_{1},a_{2},...,a_{m}}:=\frac{n!}{a_{1!}a_{2}!...a_{m}!}.
\end{equation*}
So by applying $(\ast,\ast\ast)$, we get
\begin{eqnarray*}
\sum_{n=0}^{\infty}G_{n}^{(\alpha)}(mx;\lambda)\frac{t^{n}}{n!}&=&\Big(\frac{2t}{\lambda e^{t}+1}\Big)^{\alpha}e^{mxt} \\
   &=&\Big(\frac{2t}{\lambda^{m} e^{mt}+1}\Big)^{\alpha}\Big(\sum_{k=0}^{m-1}(-\lambda)^{k}e^{kt}\Big)e^{mxt} \\
  &=&\sum_{v_{1},v_{2},...,v_{m-1}\geqslant0}{\alpha \choose v_{1},v_{2},...,v_{m-1}}(-\lambda)^{r}
  \Big(\frac{2t}{\lambda^{m}e^{mt}+1}\Big)^{\alpha}e^{(x+\frac{r}{m})mt}  \\
&=&\sum_{n=0}^{\infty}\bigg(m^{n-\alpha}\sum_{v_{1},v_{2},...,v_{m}\geqslant
0}{^{\alpha} \choose
v_{1},v_{2},...,v_{m}}(-\lambda)^{r}G_{n}^{(\alpha)}\Big(x+\frac{r}{m};\lambda^{m}\Big)\bigg)\frac{t^{n}}{n!}.
\end{eqnarray*}
By comparing the coefficient of $\frac{t^{n}}{n!}$ on both sides of last equation, proof is complete.
\end{proof}
In terms of the generalized Apostol-Genocchi polynomials, by
setting $\lambda= 1$ in Theorem 2.1, we obtain the following
explicit formula that is called multiplication Theorem for
Genocchi polynomials of higher order.

\begin{corollary}\label{co1}
For $m\in\mathbb{N}$, $n\in\mathbb{N}_{0}$, $\alpha, \in\mathbb{C}
$, we have
$$G_{n}^{(\alpha)}(mx)=m^{n-\alpha}\sum_{v_{1},v_{2},...,v_{m-1}\geqslant 0}{\alpha \choose v_{1},v_{2},...,v_{m-1}}(-1)^{r}G_{n}^{(\alpha)}\Big(x+\frac{r}{m}\Big) \ \ \text{(m is odd)}. $$
\end{corollary}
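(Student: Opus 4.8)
The plan is simply to specialize Theorem~\ref{t1} to the case $\lambda = 1$, so the proof is very short. First I would record the normalization: setting $\lambda = 1$ in the generating function
$$\Big(\frac{2z}{\lambda e^{z}+1}\Big)^{\alpha} e^{xz}=\sum_{n=0}^{\infty}G_{n}^{(\alpha )}(x; \lambda)\frac{z^{n}}{n!}$$
gives exactly $\big(2z/(e^{z}+1)\big)^{\alpha}e^{xz}$, which is the generating function of the classical Genocchi polynomials of order $\alpha$. Hence $G_{n}^{(\alpha)}(x;1)=G_{n}^{(\alpha)}(x)$ for every $n\in\mathbb{N}_{0}$; this comparison of generating functions is the only identification that has to be made.

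Second, I would substitute $\lambda = 1$ directly into formula~\eqref{8}. The prefactor $m^{n-\alpha}$, the generalized multinomial coefficient $\binom{\alpha}{v_{1},\dots,v_{m-1}}$, and the index $r=v_{1}+2v_{2}+\dots+(m-1)v_{m-1}$ are all independent of $\lambda$ and remain unchanged. The factor $(-\lambda)^{r}$ becomes $(-1)^{r}$, and since $\lambda^{m}=1$ the argument $G_{n}^{(\alpha)}\big(x+\tfrac{r}{m};\lambda^{m}\big)$ becomes $G_{n}^{(\alpha)}\big(x+\tfrac{r}{m};1\big)=G_{n}^{(\alpha)}\big(x+\tfrac{r}{m}\big)$ by the first step. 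Collecting these substitutions turns the left side into $G_{n}^{(\alpha)}(mx;1)=G_{n}^{(\alpha)}(mx)$ and the right side into precisely the asserted sum, with the hypothesis ``$m$ is odd'' inherited verbatim from Theorem~\ref{t1}.

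There is essentially no obstacle here: all of the analytic content (the partial-fraction identity, the multinomial expansion, and the comparison of Taylor coefficients) has already been carried out in the proof of Theorem~\ref{t1}, and the corollary merely extracts the $\lambda=1$ instance. The only point worth a line of care is to state explicitly that $G_{n}^{(\alpha)}(x;1)=G_{n}^{(\alpha)}(x)$, so that the notation appearing on the right-hand side of the corollary means the classical Genocchi polynomial of order $\alpha$; everything else is mechanical substitution.
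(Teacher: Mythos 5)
Your proposal is correct and matches the paper exactly: the paper obtains this corollary precisely by setting $\lambda=1$ in Theorem~\ref{t1} and identifying $G_{n}^{(\alpha)}(x;1)$ with the classical Genocchi polynomial of order $\alpha$. Your added remark making the identification $G_{n}^{(\alpha)}(x;1)=G_{n}^{(\alpha)}(x)$ explicit via the generating functions is a reasonable extra line of care, but the substance is identical.
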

And using corollary 2.2, (by setting $\alpha = 1$), we get
Corollary 2.3 that is the main result of [36] and is called
multiplication Theorem for Genocchi polynomials.
\begin{corollary}\label{co2}
For $m\in\mathbb{N}$, $n\in\mathbb{N}_{0}$, we have
$$G_{n}(mx)=m^{n-1}\sum_{k=0}^{m-1}(-1)^{k}G_{n}\Big(x+\frac{k}{m}\Big) \ \ \text{(m is odd)}.  $$
\end{corollary}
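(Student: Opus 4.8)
The plan is to deduce Corollary~\ref{co2} as the degenerate case $\alpha=1$ of Corollary~\ref{co1} (equivalently, of Theorem~\ref{t1} with $\lambda=1$ and $\alpha=1$); the only real content is that the multi-index sum collapses to a single sum over $k=0,1,\dots,m-1$. First I would recall that $G_n^{(1)}(x)=G_n(x)$, so the left-hand side of Corollary~\ref{co1} at $\alpha=1$ is precisely $G_n(mx)$. The key observation is the behaviour of the generalized multinomial coefficient $\binom{\alpha}{v_1,\dots,v_{m-1}}=\dfrac{\alpha(\alpha-1)\cdots(\alpha-s+1)}{v_1!\cdots v_{m-1}!}$, with $s=v_1+\dots+v_{m-1}$, at $\alpha=1$: it equals $1$ when $s=0$ and when exactly one $v_j$ is $1$ with the rest vanishing, while it equals $0$ as soon as $s\ge 2$, owing to the factor $(\alpha-1)$. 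Consequently the index $r=v_1+2v_2+\dots+(m-1)v_{m-1}$ runs over $0,1,\dots,m-1$, each value hit exactly once and with coefficient $1$; the sign $(-1)^r$ then matches the $(-1)^k$ of the claim, and relabelling $r$ as $k$ gives the formula.

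Alternatively, to make the corollary self-contained, I would reprove it straight from the generating function, imitating the proof of Theorem~\ref{t1}. Starting from $\sum_{n\ge 0}G_n(mx)\frac{z^n}{n!}=\frac{2ze^{mxz}}{e^{z}+1}$, I would use the elementary identity $(e^{z}+1)\sum_{k=0}^{m-1}(-1)^k e^{kz}=1+(-1)^{m-1}e^{mz}$, which for \emph{odd} $m$ reads $(e^{z}+1)\sum_{k=0}^{m-1}(-1)^k e^{kz}=e^{mz}+1$. Substituting $\dfrac{1}{e^{z}+1}=\dfrac{\sum_{k=0}^{m-1}(-1)^k e^{kz}}{e^{mz}+1}$ gives
\[
\sum_{n=0}^{\infty}G_n(mx)\frac{z^n}{n!}=\sum_{k=0}^{m-1}(-1)^k\,\frac{2z}{e^{mz}+1}\,e^{(mx+k)z}.
\]
The change of variable $w=mz$ then converts each summand into $\dfrac{1}{m}\cdot\dfrac{2w}{e^{w}+1}\,e^{(x+k/m)w}=\sum_{n\ge 0}m^{\,n-1}G_n\!\bigl(x+\tfrac{k}{m}\bigr)\frac{z^n}{n!}$, and comparing coefficients of $\frac{z^n}{n!}$ on both sides completes the proof.

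I do not anticipate a genuine obstacle; the single point that needs care is the hypothesis that $m$ is odd. This is exactly what forces $(-1)^{m-1}=1$ (equivalently $(-\lambda)^m=-\lambda^m$ in the notation of the identity $(\ast)$ used for Theorem~\ref{t1}), so that $e^{z}+1$ divides the geometric-type numerator cleanly into $e^{mz}+1$; for even $m$ one obtains $1-e^{mz}$ instead and the argument would need to be reshaped. The only remaining routine checks are the interchange of the finite $k$-sum with the power series in $z$ (harmless, the sum being finite) and the bijective re-encoding $k\leftrightarrow r=\sum_j j v_j$ that identifies this formula with the $\alpha=1$ case of Corollary~\ref{co1}.
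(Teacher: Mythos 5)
Your proposal is correct and follows essentially the same route as the paper, which obtains this corollary simply by setting $\alpha=1$ in Corollary~\ref{co1}; you additionally supply the one detail the paper leaves implicit, namely that the multi-index sum collapses to a single sum over $k=0,\dots,m-1$ because $\binom{1}{v_1,\dots,v_{m-1}}$ vanishes once $v_1+\dots+v_{m-1}\ge 2$. Your alternative direct argument from $\frac{1}{e^{z}+1}=\frac{\sum_{k=0}^{m-1}(-1)^{k}e^{kz}}{e^{mz}+1}$ (valid precisely for odd $m$) is just the $\alpha=1$ specialization of the paper's proof of Theorem~\ref{t1} and is likewise sound.
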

Now, we consider the multiplication formula for the Apostol-Genocchi numbers when, $m$ is even.
\begin{theorem}\label{t2}
For $m\in\mathbb{N}$ (m even), $n\in\mathbb{N}$, $\alpha, \lambda\in\mathbb{C}$, the following multiplication formula of the Apostol-Genocchi polynomials of higher order holds true:
$$ G_{n}^{(\alpha)}(mx;\lambda)=(-2)^{\alpha}m^{n-\alpha}\sum_{v_{1},v_{2},...,v_{m-1}\geqslant 0}{\alpha \choose v_{1},v_{2},...,v_{m-1}}(-\lambda)^{r}B_{n}^{(\alpha)}\Big(x+\frac{r}{m},\lambda^{m}\Big),$$
where $r=v_{1}+2v_{2}+...+(m-1)v_{m-1}.$
\end{theorem}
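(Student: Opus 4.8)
The plan is to run the same argument as in Theorem~\ref{t1}, but to track the parity of $m$ inside the algebraic identity $(\ast)$. I would start from the defining generating function
\[
\sum_{n=0}^{\infty}G_{n}^{(\alpha)}(mx;\lambda)\frac{t^{n}}{n!}=\Big(\frac{2t}{\lambda e^{t}+1}\Big)^{\alpha}e^{mxt},
\]
and substitute $(\ast)$ for $\tfrac{1}{\lambda e^{t}+1}$. The crucial observation is that when $m$ is \emph{even} one has $(-\lambda)^{m}=\lambda^{m}$, so the denominator coming out of $(\ast)$ is $\lambda^{m}e^{mt}-1$ rather than $\lambda^{m}e^{mt}+1$; this single sign change is exactly why a Bernoulli-type object (not a Genocchi/Euler-type one) has to appear on the right. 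Concretely, for $m$ even,
\[
\frac{1}{\lambda e^{t}+1}=-\frac{\sum_{k=0}^{m-1}(-\lambda)^{k}e^{kt}}{\lambda^{m}e^{mt}-1}.
\]

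Next I would raise to the power $\alpha$, insert $e^{mxt}$, and do the bookkeeping of scalars. Pulling out $(-1)^{\alpha}$ and rewriting $(2t)^{\alpha}=2^{\alpha}m^{-\alpha}(mt)^{\alpha}$ gives
\[
\Big(\frac{2t}{\lambda e^{t}+1}\Big)^{\alpha}e^{mxt}=(-2)^{\alpha}m^{-\alpha}\Big(\sum_{k=0}^{m-1}(-\lambda)^{k}e^{kt}\Big)^{\alpha}\Big(\frac{mt}{\lambda^{m}e^{mt}-1}\Big)^{\alpha}e^{mxt}.
\]
Then I would expand $\big(\sum_{k=0}^{m-1}(-\lambda)^{k}e^{kt}\big)^{\alpha}$ by $(\ast\ast)$ in its extension to a complex exponent (the $k=0$ term playing the role of the "$1$"), which produces
\[
\Big(\sum_{k=0}^{m-1}(-\lambda)^{k}e^{kt}\Big)^{\alpha}=\sum_{v_{1},\dots,v_{m-1}\geqslant 0}{\alpha \choose v_{1},\dots,v_{m-1}}(-\lambda)^{r}e^{rt},\qquad r=v_{1}+2v_{2}+\dots+(m-1)v_{m-1}.
\]
Combining $e^{rt}e^{mxt}=e^{(x+r/m)mt}$, the surviving factor $\big(\frac{mt}{\lambda^{m}e^{mt}-1}\big)^{\alpha}e^{(x+r/m)mt}$ is precisely the generating function of the higher-order Apostol--Bernoulli polynomials evaluated at the variable $mt$, hence equals $\sum_{n\geqslant 0}m^{n}B_{n}^{(\alpha)}\big(x+\tfrac{r}{m};\lambda^{m}\big)\frac{t^{n}}{n!}$, which supplies the factor $m^{n}$.

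Collecting everything and comparing the coefficient of $\frac{t^{n}}{n!}$ on both sides then yields the asserted identity with the prefactor $(-2)^{\alpha}m^{n-\alpha}$. The routine parts are the rescaling and the multinomial expansion; the one point that really needs care is the parity step $(-\lambda)^{m}=\lambda^{m}$ turning $+1$ into $-1$ in the denominator (this is also why the hypothesis "$m$ even" is imposed), together with treating the fractional power $\big(\frac{2t}{\lambda e^{t}+1}\big)^{\alpha}$, the factorization of the $\alpha$-th power over a product, and the generalized multinomial coefficients as a formal power-series identity — exactly as is done tacitly in Theorem~\ref{t1}. I would also flag two minor caveats: one must read $B_{n}^{(\alpha)}(x;\lambda)$ here as the higher-order Apostol--Bernoulli polynomials defined by $\big(\frac{z}{\lambda e^{z}-1}\big)^{\alpha}e^{xz}=\sum_{n}B_{n}^{(\alpha)}(x;\lambda)\frac{z^{n}}{n!}$ (which reduce to the classical $B_{n}^{(\alpha)}(x)$ when $\lambda=1$), and the index range $n\in\mathbb{N}$ is genuinely needed, since at $n=0$ the two sides need not agree (e.g.\ when $\lambda^{m}=1$ the right side picks up a nonzero $B_{0}^{(\alpha)}$ contribution while $G_{0}^{(\alpha)}(mx;\lambda)=0$ for $\alpha\in\mathbb{N}$).
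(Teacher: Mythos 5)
Your proposal follows essentially the same route as the paper's own proof: the geometric-sum identity $(\ast)$ with the parity observation $(-\lambda)^{m}=\lambda^{m}$ turning the denominator into $\lambda^{m}e^{mt}-1$, the multinomial expansion of $\bigl(\sum_{k=0}^{m-1}(-\lambda)^{k}e^{kt}\bigr)^{\alpha}$, recognition of the higher-order Apostol--Bernoulli generating function at $mt$, and comparison of coefficients of $\frac{t^{n}}{n!}$. Your explicit bookkeeping of the prefactor $(-2)^{\alpha}m^{n-\alpha}$ and the remark pinning down which Apostol--Bernoulli polynomials are meant are welcome clarifications of steps the paper leaves implicit, but the argument is the same.
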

\begin{proof}
It is easy to observe that
$$\frac{1}{\lambda e^{t}+1}=-\frac{1-\lambda e^{t}+\lambda^{2}e^{2t}+...+(-\lambda)^{m-1}e^{(m-1)t}}{(-\lambda)^{m}e^{mt}-1}. $$
So, we obtain
\begin{eqnarray*}
\sum_{n=0}^{\infty}G_{n}^{(\alpha)}(mx;\lambda)\frac{t^{n}}{n!} &=& \Big(\frac{2t}{\lambda e^{t}+1}\Big)^{\alpha}e^{mxt} \\
&=&2^{\alpha}\Big(\frac{t}{\lambda e^{t}+1}\Big)^{\alpha}e^{mxt} \\
&=&(-2)^{\alpha}\Big(\frac{t}{\lambda^{m}e^{mt}-1}\Big)^{\alpha} \Big(\sum_{k=0}^{m-1}(-\lambda e^{t})^{k}\Big)^{\alpha}e^{mxt} \\
& = &(-2)^{\alpha}\sum_{v_{1},v_{2},...,v_{m-1}\geqslant 0}{\alpha \choose v_{1},v_{2},...,v_{m-1}}(-\lambda)^{r}\Big(\frac{t}{\lambda^{m} e^{m} -1}\Big)^{\alpha}e^{(x+\frac{r}{m})mt} \\
&=&\sum_{n=0}^{\infty}\Big((-2)^{\alpha}m^{n-\alpha}\sum_{v_{1},v_{2},...,v_{m-1}\geqslant 0}{\alpha \choose v_{1},v_{2},...,v_{m-1}}(-\lambda)^{r} \\
& \times &
B_{n}^{(\alpha)}(x+\frac{r}{m};\lambda^{m})\Big)\frac{t^{n}}{n!}
\end{eqnarray*}
By comparing the coefficients of $\frac{t^{n}}{n!}$ on both sides proof will be complete.
\end{proof}
Next, using Theorem 2.4, (with $ \lambda= 1$), we obtain the
Genocchi polynomials of higher order can be expressed by the
Bernoulli polynomials of higher order when, $m$ is even
\begin{corollary}\label{co3}
For $m\in\mathbb{N}$ (m even), $n\in\mathbb{N}_{0}$, $\alpha
\in\mathbb{C}$, we get
$$G_{n}^{(\alpha)}(mx)=(-2)^{\alpha}m^{n-\alpha}\sum_{v_{1},v_{2},...,v_{m-1}\geqslant 0}{\alpha \choose v_{1},v_{2},...,v_{m-1}}(-1)^{r}B_{n}^{\alpha}\Big(x+\frac{r}{m}\Big). $$

\end{corollary}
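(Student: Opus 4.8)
This corollary is the special case $\lambda=1$ of Theorem~\ref{t2}, and the plan is simply to carry out that specialization. Setting $\lambda=1$ in the multiplication formula of Theorem~\ref{t2} turns $(-\lambda)^{r}$ into $(-1)^{r}$ and $\lambda^{m}$ into $1$, so it remains only to identify $B_{n}^{(\alpha)}(y;1)$ with the classical higher-order Bernoulli polynomial $B_{n}^{(\alpha)}(y)$. This will be immediate by comparing generating functions: putting $\lambda=1$ in $\bigl(\tfrac{z}{\lambda e^{z}-1}\bigr)^{\alpha}e^{yz}$ gives $\bigl(\tfrac{z}{e^{z}-1}\bigr)^{\alpha}e^{yz}=\sum_{n\geqslant 0}B_{n}^{(\alpha)}(y)\tfrac{z^{n}}{n!}$. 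Feeding these two reductions into Theorem~\ref{t2} then yields the stated identity.

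Alternatively, one could reproduce the argument of Theorem~\ref{t2} with $\lambda$ suppressed throughout. The key steps would be: (i) write $\sum_{n\geqslant 0}G_{n}^{(\alpha)}(mx)\tfrac{t^{n}}{n!}=\bigl(\tfrac{2t}{e^{t}+1}\bigr)^{\alpha}e^{mxt}=2^{\alpha}\bigl(\tfrac{t}{e^{t}+1}\bigr)^{\alpha}e^{mxt}$; (ii) use $\tfrac{1}{e^{t}+1}=-\tfrac{1}{e^{mt}-1}\sum_{k=0}^{m-1}(-1)^{k}e^{kt}$, valid because $m$ is even so $(-1)^{m}=1$; (iii) raise to the $\alpha$th power, producing a factor $(-1)^{\alpha}$ (so $2^{\alpha}(-1)^{\alpha}=(-2)^{\alpha}$) together with $\bigl(\sum_{k=0}^{m-1}(-1)^{k}e^{kt}\bigr)^{\alpha}$; (iv) expand this last factor by the (generalized) multinomial theorem, cf.\ $(\ast\ast)$ in the proof of Theorem~\ref{t1}, obtaining $\sum_{v_{1},\dots,v_{m-1}\geqslant 0}\binom{\alpha}{v_{1},\dots,v_{m-1}}(-1)^{r}e^{rt}$ with $r=v_{1}+2v_{2}+\dots+(m-1)v_{m-1}$; (v) substitute $z=mt$ to recognize $\bigl(\tfrac{t}{e^{mt}-1}\bigr)^{\alpha}e^{(x+r/m)mt}=m^{-\alpha}\sum_{n\geqslant 0}B_{n}^{(\alpha)}(x+\tfrac{r}{m})\,m^{n}\tfrac{t^{n}}{n!}$; and (vi) compare the coefficients of $\tfrac{t^{n}}{n!}$ on the two sides.

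I do not anticipate a genuine obstacle: the statement is a direct corollary. The only things needing care are the bookkeeping of constants---checking that the $2^{\alpha}$ from the numerator combines with the sign $(-1)^{\alpha}$ coming from inverting the denominator to give exactly $(-2)^{\alpha}$, and that the factor $m^{-\alpha}$ from the rescaling $z=mt$ combines with the $m^{n}$ absorbed from $e^{rt}$ into $(mt)^{n}$ to give $m^{n-\alpha}$---together with, for non-integral $\alpha$, the usual convergence (equivalently, formal-series) justification of the generalized multinomial expansion, which is precisely the setting already adopted in Theorems~\ref{t1} and \ref{t2}.
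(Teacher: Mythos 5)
Your proposal is correct and matches the paper's route exactly: the paper likewise obtains this corollary simply by putting $\lambda=1$ in Theorem~\ref{t2} and reading $B_{n}^{(\alpha)}(\,\cdot\,;1)$ as the classical higher-order Bernoulli polynomial. Your optional direct derivation is just the proof of Theorem~\ref{t2} rerun with $\lambda$ suppressed, so it does not constitute a genuinely different approach.
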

Also by applying $\alpha =1$, in corollary 2.5 we obtain the
following assertion that is one of the most remarkable identities
in area of Genocchi polynomials.
\begin{corollary}\label{co4}
For $m\in\mathbb{N}$, $n\in\mathbb{N}_{0}$, we obtain
$$G_{n}(mx)=-2m^{n-1}\sum_{k=0}^{m-1}(-1)^{k}B_{n}\Big(x+\frac{k}{m}\Big) \ \ \text{ $m$ is even}. $$
\end{corollary}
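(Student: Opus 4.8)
The plan is to read off this identity as the $\alpha=1$ instance of Corollary 2.5 (equivalently, of Theorem 2.4 with $\lambda=1$). Recalling that $B_{n}^{(1)}(x)=B_{n}(x)$ and that the prefactor $(-2)^{\alpha}m^{n-\alpha}$ becomes $-2m^{n-1}$ at $\alpha=1$, the only point that needs checking is that the multinomial sum over $v_{1},\dots,v_{m-1}\ge 0$ collapses to the single sum over $k=0,\dots,m-1$. For the positive integer order $\alpha=1$ this is immediate: the coefficient $\binom{1}{v_{1},\dots,v_{m-1}}$ is nonzero only for the $m$ tuples of total weight at most $1$, namely the zero tuple (for which $r=0$) and the unit tuples with a single $1$ in slot $j$, $1\le j\le m-1$ (for which $r=j$), and in each such case the coefficient equals $1$. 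Relabelling the surviving index as $k$ turns $(-\lambda)^{r}$ into $(-1)^{k}$ and $B_{n}^{(1)}\big(x+\tfrac{r}{m};\lambda^{m}\big)$ into $B_{n}\big(x+\tfrac{k}{m}\big)$, which is exactly the claimed formula.

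To keep the statement self-contained, I would instead rerun the proof of Theorem 2.4 with $\alpha=\lambda=1$ fixed from the outset. Using the $\lambda=1$, $m$ even case of the elementary identity $(\ast)$, namely
$$\frac{1}{e^{t}+1}=-\,\frac{1-e^{t}+e^{2t}-\cdots+(-1)^{m-1}e^{(m-1)t}}{e^{mt}-1},$$
one obtains
$$\sum_{n=0}^{\infty}G_{n}(mx)\frac{t^{n}}{n!}=\frac{2t\,e^{mxt}}{e^{t}+1}=-\,\frac{2t}{e^{mt}-1}\Big(\sum_{k=0}^{m-1}(-1)^{k}e^{kt}\Big)e^{mxt}=-2\sum_{k=0}^{m-1}(-1)^{k}\,\frac{t}{e^{mt}-1}\,e^{\left(x+\frac{k}{m}\right)mt}.$$
Since
$$\frac{t}{e^{mt}-1}\,e^{\left(x+\frac{k}{m}\right)mt}=\frac{1}{m}\sum_{n=0}^{\infty}B_{n}\!\left(x+\tfrac{k}{m}\right)\frac{(mt)^{n}}{n!}=\sum_{n=0}^{\infty}m^{n-1}B_{n}\!\left(x+\tfrac{k}{m}\right)\frac{t^{n}}{n!},$$
comparing the coefficients of $t^{n}/n!$ on the two sides gives $G_{n}(mx)=-2m^{n-1}\sum_{k=0}^{m-1}(-1)^{k}B_{n}\!\left(x+\tfrac{k}{m}\right)$.

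I do not anticipate any genuine obstacle: the computation is mechanical, and the only step that deserves a word of care is the collapse of the higher-order multinomial sum to the plain sum over $k$, i.e. recognizing that $\binom{1}{v_{1},\dots,v_{m-1}}$ is supported on tuples of weight $\le 1$. As a consistency check, when $n=0$ both sides vanish — the right-hand side because $\sum_{k=0}^{m-1}(-1)^{k}=0$ for $m$ even — in agreement with $G_{0}(x)=0$.
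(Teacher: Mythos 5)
Your proposal is correct and follows essentially the same route as the paper, which obtains this corollary simply by setting $\alpha=1$ in Corollary 2.5 (itself the $\lambda=1$ case of Theorem 2.4). Your added details --- the observation that $\binom{1}{v_{1},\dots,v_{m-1}}$ is supported on tuples of weight at most $1$, so the multinomial sum collapses to the single sum over $k$, and the direct rerun of the generating-function computation --- are accurate and merely make explicit what the paper leaves implicit.
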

Obviously, the result of Corollary \ref{co4} is analogous with the
well-known Raabe's multiplication formula. Now, we present
explicit evaluations of $Z_{n}^{(l)}(m;\lambda)$,
$Z_{n}^{(l)}(\lambda)$, $Z_{n}(m)$ by Apostol-Genocchi
polynomials.
\begin{theorem}\label{t3}
For $m,n,l\in\mathbb{N}_{0},\lambda\in\mathbb{C}$, we have
$$Z_{n}^{(l)}(m;\lambda)=2^{-l}\sum_{j=0}^{l}{l \choose j}\frac{(-1)^{j(m+1)}\lambda^{mj+l} }{(n+1)_{l}}\sum_{k=0}^{n+l}{n+l \choose k}G_{k}^{(j)}(mj+l;\lambda)G_{n+l-k}^{(l-j)}(\lambda) $$
where $(n)_{0}=1, (n)_{k}=n(n+1)...(n+k-1). $
\end{theorem}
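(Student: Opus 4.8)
The plan is to introduce the auxiliary generating function
$$F(t)=\left(\sum_{j=1}^{m}(-1)^{j+1}\lambda^{j}e^{jt}\right)^{l},$$
to evaluate it in two different ways, then to multiply the outcome by $(2t)^{l}$ and compare the coefficients of $t^{n+l}$ on the two sides. This is the same mechanism that drives Theorems \ref{t1} and \ref{t2}; the only new ingredient is the extra factor $t^{l}$, and it is precisely this factor that manufactures the Pochhammer symbol $(n+1)_{l}$ in the conclusion.

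First, I would expand $F(t)$ by the multinomial expansion used in the proof of Theorem \ref{t1}. Writing $v_{1}+v_{2}+\cdots+v_{m}=l$ and $r=v_{1}+2v_{2}+\cdots+mv_{m}$, one has $\prod_{j=1}^{m}\left((-1)^{j+1}\lambda^{j}\right)^{v_{j}}=(-1)^{l}(-\lambda)^{r}$ and $\prod_{j=1}^{m}\left(e^{jt}\right)^{v_{j}}=e^{rt}$, so that, after expanding $e^{rt}=\sum_{k\geq 0}r^{k}t^{k}/k!$ and reading the definition of $Z_{k}^{(l)}(m;\lambda)$ with the factor $r^{k}$ present (as in \cite{Lu2}), one gets $F(t)=\sum_{k\geq 0}Z_{k}^{(l)}(m;\lambda)\,t^{k}/k!$, hence
$$(2t)^{l}F(t)=2^{l}\sum_{k\geq 0}Z_{k}^{(l)}(m;\lambda)\,\frac{t^{k+l}}{k!}.$$

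On the other hand, summing the finite geometric-type series gives $\sum_{j=1}^{m}(-1)^{j+1}(\lambda e^{t})^{j}=\lambda e^{t}(1-(-\lambda e^{t})^{m})/(1+\lambda e^{t})$. Raising this to the $l$-th power, multiplying by $(2t)^{l}$, expanding $(1-(-\lambda e^{t})^{m})^{l}$ by the binomial theorem (using $(-1)^{j}(-\lambda)^{mj}=(-1)^{(m+1)j}\lambda^{mj}$), and then splitting $\left(2t/(\lambda e^{t}+1)\right)^{l}=\left(2t/(\lambda e^{t}+1)\right)^{j}\left(2t/(\lambda e^{t}+1)\right)^{l-j}$, one recognises $e^{(mj+l)t}\left(2t/(\lambda e^{t}+1)\right)^{j}$ and $\left(2t/(\lambda e^{t}+1)\right)^{l-j}$ as the generating functions of $G_{k}^{(j)}(mj+l;\lambda)$ and of $G_{p}^{(l-j)}(\lambda)$ respectively. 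Taking the Cauchy product then yields
$$(2t)^{l}F(t)=\sum_{j=0}^{l}\binom{l}{j}(-1)^{(m+1)j}\lambda^{mj+l}\sum_{N\geq 0}\left(\sum_{k=0}^{N}\binom{N}{k}G_{k}^{(j)}(mj+l;\lambda)\,G_{N-k}^{(l-j)}(\lambda)\right)\frac{t^{N}}{N!}.$$

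Finally, I would equate the coefficients of $t^{n+l}$ in the two expressions for $(2t)^{l}F(t)$: the first contributes $2^{l}Z_{n}^{(l)}(m;\lambda)/n!$, while the second contributes $\frac{1}{(n+l)!}\sum_{j=0}^{l}\binom{l}{j}(-1)^{(m+1)j}\lambda^{mj+l}\sum_{k=0}^{n+l}\binom{n+l}{k}G_{k}^{(j)}(mj+l;\lambda)G_{n+l-k}^{(l-j)}(\lambda)$; since $n!/(n+l)!=1/(n+1)_{l}$, solving for $Z_{n}^{(l)}(m;\lambda)$ produces exactly the asserted identity. I do not expect a genuine obstacle: the two delicate steps are purely bookkeeping, namely the sign consolidation $(-1)^{j}(-\lambda)^{mj}=(-1)^{(m+1)j}\lambda^{mj}$ and the index shift forced by the factor $t^{l}$, which is what turns the exponent $n$ on the left into $n+l$ on the right and creates the symbol $(n+1)_{l}$. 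It is worth noting that the geometric-sum identity used here holds for every $m$, so — unlike in Theorems \ref{t1} and \ref{t2} — no parity hypothesis on $m$ is needed.
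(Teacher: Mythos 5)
Your proof is correct and follows essentially the same route as the paper: both form the generating function $\sum_{k\ge 0}Z_{k}^{(l)}(m;\lambda)t^{k}/k!=\bigl(\lambda e^{t}-\lambda^{2}e^{2t}+\cdots+(-1)^{m+1}\lambda^{m}e^{mt}\bigr)^{l}$, rewrite the alternating sum over the common denominator $\lambda e^{t}+1$, expand the $l$-th power binomially, recognise the two Apostol--Genocchi generating functions, and extract the Pochhammer factor $(n+1)_{l}$ from the shift by $t^{l}$. The only differences are cosmetic (you multiply by $(2t)^{l}$ and compare coefficients of $t^{n+l}$ where the paper inserts $(2t)^{-l}$, and you make explicit the geometric summation and the corrected reading of the definition of $Z_{k}^{(l)}$ with the factor $r^{k}$, both of which the paper leaves implicit).
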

\begin{proof}
By definition of $Z_{n}^{(l)}(m;\lambda)$, we calculate the following sum
\\
\begin{flushleft}
    $\sum_{n=0}^{\infty}Z_{n}^{(l)}(m;\lambda)\frac{t^{n}}{n!}=$
\end{flushleft}
$$\sum_{n=0}^{\infty}\Big[(-1)^{l}\sum_{^{0\leqslant v_{1},v_{2},...,v_{m}\leqslant l}_{v_{1}+v_{2}+...+v_{m}=l}}{l \choose v_{1},v_{2},...,v_{m}}(-\lambda)^{\lambda_{1}+2\lambda_{2}+...+m\lambda_{m}}(v_{1}+2v_{2}+...+mv_{m})^{n}\Big]\frac{t^{n}}{n!} $$
\begin{eqnarray*}
&=&(-1)^{l}\sum_{^{0\leqslant v_{1},v_{2},...,v_{m}\leqslant l}_{v_{1}+v_{2}+...+v_{m}=l}}{l \choose v_{1},v_{2},...,v_{m}}(-\lambda e^{t})^{\lambda_{1}+2\lambda_{2}+...+m\lambda_{m}} \\
&=&(\lambda e^{t}-\lambda^{2}e^{2t}+...+(-1)^{m+1}\lambda^{m}e^{mt})^{l} \\
&=&\bigg(\frac{(-1)^{m+1}\lambda^{m+1}e^{(m+1)t}}{\lambda e^{t}+1}+\frac{\lambda e^{t}}{\lambda e^{t}+1}\bigg)^{l}\\
&=&(2t)^{-l}\sum_{j=0}^{l}{l \choose j}\Big[\frac{2t(-1)^{m+1}\lambda^{m+1}e^{(m+1)t}}{\lambda e^{t} +1}\Big]^{j}\Big[\frac{2t\lambda e^{t}}{\lambda e^{t}+1}\Big]^{l-j}\\
&=&(2t)^{-l}\sum_{j=0}^{l}{l \choose j}(-1)^{j(m+1)}\lambda^{mj+l}\sum_{n=0}^{\infty}G_{n}^{(j)}(mj+l;\lambda)\frac{t^{n}}{n!}
\sum_{n=0}^{\infty}G_{n}^{(l-j)}(\lambda)\frac{t^{n}}{n!}\\
&=&2^{-l}\sum_{n=0}^{\infty}\bigg[\sum_{j=0}^{l}{j \choose
l})\frac{(-1)^{j(m+1)}\lambda^{mj+l}}{(n+1)_{l}}
\sum_{k=0}^{n+l}{n+l \choose
k}G_{k}^{(j)}(mj+l;\lambda)G_{n+l-k}^{(l-j)}(\lambda)\bigg]\frac{t^{n}}{n!}
\end{eqnarray*}
by comparing the coefficients of $\frac{t^{n}}{n!}$ on both sides, proof will be complete.
\end{proof}

As a direct result, using $\lambda=1$ in Theorem 2.7, we derive an
explicit representation of multiple alternating sums $Z^{(l)}_n
(m)$, in terms of the Genocchi polynomials of higher order. We
also deduce their special cases and applications which lead to the
corresponding results for the Genocchi polynomials.

\begin{corollary}
Form $m,n,l\in\mathbb{N}_{0}$, the following formula holds true in terms of the Genocchi polynimials
\begin{equation*}
Z_{n}^{(l)}(m)=2^{-l}\sum_{j=0}^{l}{l \choose j}\frac{(-1)^{j(m+1)}}{(n+1)_{l}}\sum_{k=0}^{n+l}{n+l \choose k}G_{k}^{(j)}(mj+l)G_{n+l-k}^{l-j}
\end{equation*}
where $(n)_{0}=1, (n)_{k}=n(n+1)...(n+k-1)$.
\end{corollary}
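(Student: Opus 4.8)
The plan is to obtain this corollary at once from Theorem \ref{t3} by putting $\lambda = 1$. With $\lambda = 1$ the weight $(-\lambda)^{v_1+2v_2+\cdots+mv_m}$ appearing in the definition of $Z_n^{(l)}(m;\lambda)$ becomes $(-1)^{v_1+2v_2+\cdots+mv_m}$, so that $Z_n^{(l)}(m;1)$ is exactly $Z_n^{(l)}(m)$; simultaneously every factor $\lambda^{mj+l}$ collapses to $1$, $G_k^{(j)}(mj+l;1)=G_k^{(j)}(mj+l)$, and $G_{n+l-k}^{(l-j)}(1)=G_{n+l-k}^{(l-j)}$ are the Genocchi polynomials and numbers of higher order. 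Substituting these specializations into the identity of Theorem \ref{t3} reproduces the asserted formula verbatim, so strictly speaking nothing more is needed.

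For completeness I would also give the self-contained generating-function derivation, following the proof of Theorem \ref{t3} with $\lambda$ suppressed throughout. First, replacing $(v_1+2v_2+\cdots+mv_m)^n$ by $e^{(v_1+2v_2+\cdots+mv_m)t}$ inside the multinomial expansion — legitimate because $\sum_{n\geqslant 0} j^n t^n/n! = e^{jt}$ — one finds \[ \sum_{n=0}^{\infty} Z_n^{(l)}(m)\,\frac{t^n}{n!} = \bigl(e^t - e^{2t} + \cdots + (-1)^{m+1}e^{mt}\bigr)^l . \] The finite geometric sum $\sum_{j=1}^{m}(-1)^{j+1}e^{jt}$ then telescopes, exactly as in the proofs of Theorems \ref{t1} and \ref{t2}, to \[ e^t - e^{2t} + \cdots + (-1)^{m+1}e^{mt} = \frac{e^t}{e^t+1} + \frac{(-1)^{m+1}e^{(m+1)t}}{e^t+1}. \] Raising to the $l$-th power, expanding by the binomial theorem, multiplying numerator and denominator by $(2t)^l=(2t)^j(2t)^{l-j}$, and combining the exponentials via $e^{(m+1)jt}e^{(l-j)t}=e^{(mj+l)t}$ yields \[ \sum_{n=0}^{\infty} Z_n^{(l)}(m)\,\frac{t^n}{n!} = (2t)^{-l}\sum_{j=0}^{l}\binom{l}{j}(-1)^{j(m+1)}\Bigl(\frac{2t}{e^t+1}\Bigr)^{j}e^{(mj+l)t}\Bigl(\frac{2t}{e^t+1}\Bigr)^{l-j}. \]

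At this point I would invoke the generating functions $\bigl(2t/(e^t+1)\bigr)^{j}e^{(mj+l)t}=\sum_{n} G_n^{(j)}(mj+l)\,t^n/n!$ and $\bigl(2t/(e^t+1)\bigr)^{l-j}=\sum_{n} G_n^{(l-j)}\,t^n/n!$, form the Cauchy product of the two series, and finally handle the prefactor $(2t)^{-l}$. This last step is the only delicate one: since $\bigl(2t/(e^t+1)\bigr)^l$ vanishes to order $t^l$ at $t=0$, the Cauchy product is divisible by $t^l$, so multiplication by $(2t)^{-l}$ leaves a genuine power series, and matching the coefficient of $t^n/n!$ on the left with the coefficient of $t^{n+l}$ on the right forces the index shift $n\mapsto n+l$ together with the replacement of $n!/(n+l)!$ by $1/(n+1)_l$. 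Reading off the coefficient of $t^n/n!$ then gives precisely \[ Z_n^{(l)}(m) = 2^{-l}\sum_{j=0}^{l}\binom{l}{j}\frac{(-1)^{j(m+1)}}{(n+1)_l}\sum_{k=0}^{n+l}\binom{n+l}{k}G_k^{(j)}(mj+l)\,G_{n+l-k}^{(l-j)}, \] which is the claimed identity. The main obstacle is therefore nothing more than the bookkeeping around the $t^{-l}$ shift and the rising factorial it produces; the rest is a direct transcription of the proof of Theorem \ref{t3}.
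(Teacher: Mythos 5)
Your proposal is correct and matches the paper exactly: the paper obtains this corollary precisely by setting $\lambda=1$ in Theorem \ref{t3}, with no further argument. The supplementary generating-function derivation you sketch is just a transcription of the paper's proof of Theorem \ref{t3} with $\lambda$ suppressed, so it adds nothing beyond the one-line specialization.
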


Next we investigate some of the recursive formulas for the
Apostol-Genocchi numbers of higher order that are analogous to the
results of Howard \cite{Ho1,Ho2,Ho3} and we deduce that they
constitute a useful special case.
\begin{theorem}\label{t4}
For $m$ be odd, $n,l\in\mathbb{N}_{0} \ ,\lambda\in\mathbb{C}$, we have
\end{theorem}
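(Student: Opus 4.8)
The plan is to follow the generating-function template used in the proofs of Theorems \ref{t1}, \ref{t2} and \ref{t3}. Start from $\sum_{n\geq 0}G_{n}^{(l)}(\,\cdot\,;\lambda)\frac{t^{n}}{n!}=\bigl(\frac{2t}{\lambda e^{t}+1}\bigr)^{l}(\cdots)$, rewrite the kernel by the identity $(\ast)$, expand the resulting power of the finite geometric sum by the multinomial theorem $(\ast\ast)$, re-assemble the pieces as generating functions of Apostol-Genocchi polynomials of orders $j$ and $l-j$ (and, where the statement calls for it, of the alternating sums $Z_{k}(m;\lambda)$, $Z_{k}^{(l)}(m;\lambda)$ from Section~1), and finish by comparing coefficients of $\frac{t^{n}}{n!}$. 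The hypothesis that $m$ is \emph{odd} is exactly what makes this clean: for odd $m$ one has $(-\lambda)^{m}e^{mt}-1=-(\lambda^{m}e^{mt}+1)$, so $(\ast)$ collapses to
\[
\frac{1}{\lambda e^{t}+1}=\frac{\sum_{k=0}^{m-1}(-\lambda e^{t})^{k}}{\lambda^{m}e^{mt}+1},
\]
whose numerator is literally the multinomial-ready sum $\sum_{k=0}^{m-1}(-\lambda)^{k}e^{kt}$, with no residual sign to carry.

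Concretely, I would write
\[
\Bigl(\frac{2t}{\lambda e^{t}+1}\Bigr)^{l}=\Bigl(\frac{2t}{\lambda^{m}e^{mt}+1}\Bigr)^{l}\Bigl(\sum_{k=0}^{m-1}(-\lambda e^{t})^{k}\Bigr)^{l},
\]
apply $(\ast\ast)$ to the last factor to get $\sum_{v_{1},\dots,v_{m-1}\geq 0}\binom{l}{v_{1},\dots,v_{m-1}}(-\lambda)^{r}e^{rt}$ with $r=v_{1}+2v_{2}+\cdots+(m-1)v_{m-1}$, and re-insert whatever extra exponential/monomial factor the statement carries. Each summand is then $\binom{l}{\vec v}(-\lambda)^{r}$ times a shifted higher-order Apostol-Genocchi generating function in base $\lambda^{m}$; the replacement $x\mapsto x+\frac{r}{m}$ together with the rescaling $t\mapsto mt$ supplies the power $m^{n-l}$ exactly as in the proof of Theorem \ref{t1}. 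If instead the right-hand side is assembled as in Theorem \ref{t3} — splitting $\frac{\lambda e^{t}}{\lambda e^{t}+1}=1-\frac{1}{\lambda e^{t}+1}$ and recognizing $\sum_{j=1}^{m}(-1)^{j+1}\lambda^{j}e^{jt}=\frac{\lambda e^{t}+(-1)^{m+1}\lambda^{m+1}e^{(m+1)t}}{\lambda e^{t}+1}$ — then a binomial expansion of the $l$-th power into orders $j$ and $l-j$ plays the same role.

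The genuinely delicate step, and the one I expect to be the main obstacle, is the bookkeeping of the spare power of $t$ sitting outside the Apostol-Genocchi kernels. Pulling $(2t)^{-l}$ (or a positive $t^{l}$) out in front and then extracting the coefficient of $t^{n}$ forces the summation index to be shifted by $l$; matching $\frac{1}{(n+l)!}$ against $\frac{1}{n!}$ through $\frac{n!}{(n+l)!}=\frac{1}{(n+1)_{l}}$ is what produces the rising factorial $(n+1)_{l}=(n+1)(n+2)\cdots(n+l)$ and forces the inner Cauchy convolution to be written with $\binom{n+l}{k}$ rather than $\binom{n}{k}$, precisely as in the proof of Theorem \ref{t3}. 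Getting the index shifts, the powers of $\lambda$ (e.g.\ the $\lambda^{mj+l}$-type factors that appeared there), and the signs $(-1)^{j(m+1)}$ simultaneously right is the only real hazard; reindexing the constrained multinomial sum and relabelling $r$ is mechanical.

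As a check I would verify the limiting cases: $\lambda=1$ must reduce the identity to the corresponding recursive formula for Genocchi numbers/polynomials of higher order, and then $l=1$ (and, if relevant, $n=l$) should recover the classical recurrences of Howard \cite{Ho1,Ho2,Ho3}; agreement in these limits is the quickest way to catch a misplaced sign or index in the general derivation.
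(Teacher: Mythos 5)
Your plan heads back to the generating function, but the paper proves Theorem \ref{t4} without any fresh generating-function computation: it is a short deduction from Theorem \ref{t1}. The identity to be established is
\[
m^{n}G_{n}^{(l)}(\lambda^{m})-m^{l}G_{n}^{(l)}(\lambda)=(-1)^{l-1}\sum_{k=0}^{n}{n \choose k}m^{k}G_{k}^{(l)}(\lambda^{m})Z_{n-k}^{(l)}(m-1;\lambda),
\]
and the paper gets it by setting $x=0$, $\alpha=l$ in (\ref{8}) to obtain
\[
m^{l}G_{n}^{(l)}(\lambda)=m^{n}\sum_{v_{1},\dots,v_{m-1}\geq 0}{l \choose v_{1},\dots,v_{m-1}}(-\lambda)^{r}G_{n}^{(l)}\Big(\tfrac{r}{m};\lambda^{m}\Big),
\]
then expanding $G_{n}^{(l)}(\tfrac{r}{m};\lambda^{m})=\sum_{k=0}^{n}{n \choose k}G_{k}^{(l)}(\lambda^{m})(\tfrac{r}{m})^{n-k}$, interchanging the two sums, splitting off from the multinomial sum the contribution that produces the term $m^{n}G_{n}^{(l)}(\lambda^{m})$, and recognizing the remaining constrained sum $\sum{l \choose \vec v}(-\lambda)^{r}r^{n-k}$ as $(-1)^{l}Z_{n-k}^{(l)}(m-1;\lambda)$ straight from Luo's definition of the multiple alternating sum. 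No power series is ever expanded or compared.

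Measured against that, your proposal has two concrete problems. First, it never pins down the identity being proved, so the three steps that actually constitute the proof --- the expansion of $G_{n}^{(l)}(x;\lambda)$ in powers of $x$ with coefficients $G_{k}^{(l)}(\lambda)$, the isolation of the term giving $m^{n}G_{n}^{(l)}(\lambda^{m})$ on the left-hand side, and the identification of the leftover multinomial sum with $Z_{n-k}^{(l)}(m-1;\lambda)$ --- are all absent; ``recognize the alternating sums from Section~1'' is the entire difficulty here, not a routine reassembly. Second, the hazard you single out as the main obstacle (the spare power of $t$, the rising factorial $(n+1)_{l}$, the ${n+l \choose k}$ convolution) belongs to the proof of Theorem \ref{t3} and simply does not arise in Theorem \ref{t4}: both sides involve Apostol--Genocchi numbers of the same order $l$, so there is no leftover factor of $t^{l}$ and no index shift. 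A from-scratch generating-function derivation could probably be pushed through, but as written your text is a description of the toolbox used elsewhere in the paper rather than a proof of this statement.
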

\begin{equation*}
m^{n}G_{n}^{(l)}(\lambda^{m})-m^{l}G_{n}^{(l)}(\lambda)=(-1)^{l-1}\sum_{k=0}^{n}{n \choose k}m^{k}G_{k}^{(l)}(\lambda^{m})Z_{n-k}^{(l)}(m-1;\lambda).
\end{equation*}
\begin{proof}
By taking $x=0, \alpha=l$ in (\ref{8}), where $r=v_{1}+2v_{2}+...+(m-1)v_{m-1}$ we obtain
\begin{equation*}
m^{l}G_{n}^{(l)}(\lambda)=m^{n}\sum_{v_{1},v_{2},...,v_{m-1}\geqslant 0}{l \choose v_{1},v_{2},...,v_{m-1}}(-\lambda)^{r}G_{n}^{(l)}(\frac{r}{m},\lambda^{m}).
\end{equation*}
But we know
\begin{equation*}
G_{n}^{(l)}(x;\lambda)=\sum_{k=0}^{n}{n \choose k}G_{k}^{(l)}(\lambda)x^{n-k}.
\end{equation*}
So, we obtain
\begin{eqnarray*}
m^{l}G_{n}^{(l)}(\lambda)&=&m^{n}\sum_{v_{1},v_{2},...,v_{m-1}\geqslant  0}{l \choose v_{1},v_{2},...,v_{m-1}}(-\lambda)^{r}\sum_{k=0}^{n}{n \choose k}G_{k}^{(l)}(\lambda^{m})\Big(\frac{r}{m}\Big)^{n-k} \\
&=&\sum_{k=0}^{n}{n \choose k}m^{k}G_{k}^{(l)}(\lambda^{m})\sum_{0 \leqslant v_{1},v_{2},...,v_{m-1}\leqslant l}{l \choose v_{1},v_{2},...,v_{m-1}}(-\lambda)^{r}r^{n-k}\\
&=&\sum_{k=0}^{n}{n \choose k}m^{k}G_{k}^{(l)}(\lambda^{m})\sum_{^{0 \leqslant v_{1},v_{2},...,v_{m-1}\leqslant l}_{v_{1}+v_{2}+...v_{m-1}=l}}{l \choose v_{1},v_{2},...,v_{m-1}}(-\lambda)^{r}r^{n-k}+m^{n}G_{n}^{(l)}(\lambda^{m}) \\
&=&(-1)^{l}\sum_{k=0}^{n}{n \choose k}m^{k}G_{k}^{(l)}(\lambda^{m})Z_{n-k}^{(l)}(m-1;\lambda)+m^{n}G_{n}^{(l)}(\lambda^{m})
\end{eqnarray*}
So proof is complete.
\end{proof}
Furthermore, we derive some well-known results (see [10])
involving Genocchi polynomials of higher order and Genocchi
polynomials which we state here. By setting $\lambda=1$, $l=1$ in
Theorem 2.9, we get Corollaries 2.10, 2.11, respectively
\begin{corollary}\label{co5}
For $m$ be odd, $n,l\in\mathbb{N}_{0} \ $, we have
\begin{equation*}
(m^{n}-m^{l})G_{n}^{(l)}=(-1)^{l-1}\sum_{k=0}^{n}{n \choose k}G_{k}^{(l)}Z_{n-k}^{(l)}(m-1).
\end{equation*}
\end{corollary}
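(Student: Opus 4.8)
The plan is to derive this corollary as the special case $\lambda = 1$ of Theorem \ref{t4}, so that no fresh generating-function manipulation should be needed. First I would record how each of the three ingredients appearing in Theorem \ref{t4} degenerates when $\lambda = 1$. Since then $\lambda^{m} = 1$ as well, the higher-order Apostol--Genocchi numbers collapse to the ordinary higher-order Genocchi numbers, $G_{k}^{(l)}(\lambda^{m}) = G_{k}^{(l)}(1) = G_{k}^{(l)}$ and $G_{n}^{(l)}(\lambda) = G_{n}^{(l)}$; the weighted multiple alternating sum reduces to its unweighted counterpart, $Z_{n-k}^{(l)}(m-1;\lambda) = Z_{n-k}^{(l)}(m-1)$; and the sign $(-1)^{l-1}$ is untouched.

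Next I would assemble the two sides against the target, which carries no weight inside the summand. On the left, the two terms of Theorem \ref{t4} share the common factor $G_{n}^{(l)}$, so that $m^{n}G_{n}^{(l)} - m^{l}G_{n}^{(l)} = (m^{n}-m^{l})G_{n}^{(l)}$, which is precisely the left-hand side of the assertion. Substituting the specializations above into the right-hand side of Theorem \ref{t4} then produces $(-1)^{l-1}\sum_{k=0}^{n}{n \choose k}m^{k}G_{k}^{(l)}Z_{n-k}^{(l)}(m-1)$, and comparing this with the stated right-hand side reduces the whole corollary to a single point: whether the factor $m^{k}$ may be suppressed so as to match the summand as worded.

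That single point is the step I expect to be the main obstacle. A literal substitution $\lambda = 1$ does nothing to the weight $m^{k}$ inherited from Theorem \ref{t4}, whereas the corollary as stated contains no such factor, so the two forms coincide only if $m^{k}$ either cancels or can be absorbed. To test whether this is feasible I would examine the smallest nontrivial case $m = 3$, $l = 1$, $n = 2$, where $Z_{j}^{(1)}(2) = 1 - 2^{j}$ and $G_{0} = 0$, $G_{1} = 1$, $G_{2} = -1$: the left-hand side is $(9-3)G_{2} = -6$, while the summand returns $-6$ precisely when $m^{k}$ is kept in place and returns $-2$ when it is dropped. Thus the weight is indispensable, and the honest conclusion of the plan is that the identity must be read with $m^{k}$ retained inside the summand, in exact agreement with Theorem \ref{t4}; once that is granted the remainder is the routine bookkeeping above, the only substantive checks being the legitimacy of the two specializations $G_{k}^{(l)}(1)=G_{k}^{(l)}$ and $Z_{n-k}^{(l)}(m-1;1)=Z_{n-k}^{(l)}(m-1)$.
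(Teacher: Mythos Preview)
Your approach is exactly the paper's: the corollary is obtained simply by setting $\lambda=1$ in Theorem~\ref{t4}, with no further argument. You are also correct that the printed statement omits the factor $m^{k}$ inside the sum; your numerical check at $m=3$, $l=1$, $n=2$ confirms this is a typo, and the intended identity is precisely the one you derive.
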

\begin{corollary}
For $m$ be odd, $n\in\mathbb{N}_{0} \ ,\lambda\in\mathbb{C}$, we
have
\begin{equation*}
m^{n}G_{n}(\lambda^{m})-mG_{n}(\lambda)=\sum_{k=0}^{n}{n \choose k}m^{k}G_{k}(\lambda^{m})Z_{n-k}(m-1;\lambda).
\end{equation*}
\end{corollary}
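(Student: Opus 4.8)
The plan is to read off this identity as the case $l=1$ of Theorem \ref{t4}. When $l=1$ the sign $(-1)^{l-1}$ equals $1$, the order-$l$ quantities degenerate to the ordinary Apostol-Genocchi numbers via $G_n^{(1)}(\mu)=G_n(\mu)$, and the left-hand side of Theorem \ref{t4} becomes $m^n G_n(\lambda^m)-mG_n(\lambda)$, which is already the asserted left-hand side. So the whole content of the deduction is the identification $Z_{n-k}^{(1)}(m-1;\lambda)=Z_{n-k}(m-1;\lambda)$ of the multiple alternating sum with the ordinary one; granted this, substituting into Theorem \ref{t4} produces the displayed formula verbatim (with $k$ as the summation index).

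To establish that identification I would specialize the definition of $Z_j^{(l)}(m;\lambda)$ to $l=1$: the inner sum then ranges over all tuples $(v_1,\dots,v_{m-1})$ of nonnegative integers with $v_1+\dots+v_{m-1}=1$, i.e. over the $m-1$ standard unit tuples $e_j$ with $1\le j\le m-1$; for each of these the multinomial coefficient equals $1$ and $v_1+2v_2+\dots+(m-1)v_{m-1}=j$. Hence $Z_{n-k}^{(1)}(m-1;\lambda)=-\sum_{j=1}^{m-1}(-\lambda)^j j^{\,n-k}=\sum_{j=1}^{m-1}(-1)^{j+1}\lambda^j j^{\,n-k}$, which is precisely $Z_{n-k}(m-1;\lambda)$ by its definition.

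A completely self-contained alternative, which also serves as a check, avoids Theorem \ref{t4} and argues straight from \eqref{8}: put $\alpha=1$, $x=0$, $m$ odd; since for $\alpha=1$ only the tuples with $v_1+\dots+v_{m-1}\le 1$ survive, \eqref{8} collapses to $G_n(\lambda)=m^{n-1}\sum_{k=0}^{m-1}(-\lambda)^k G_n(k/m;\lambda^m)$. Expanding each $G_n(k/m;\lambda^m)$ through $G_n(x;\lambda)=\sum_{i=0}^n\binom{n}{i}G_i(\lambda)x^{n-i}$, interchanging the two finite sums, and splitting off the $k=0$ term of $\sum_{k=0}^{m-1}(-\lambda)^k k^{n-i}$ (which contributes only for $i=n$ and yields the term $m^{n-1}G_n(\lambda^m)$), the remaining part $\sum_{k=1}^{m-1}(-\lambda)^k k^{n-i}$ equals $-Z_{n-i}(m-1;\lambda)$; multiplying through by $m$ and transposing gives the claim. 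There is no genuine obstacle in either route — the only point needing a little care is the bookkeeping of the $k=0$ (equivalently $r=0$) term together with the sign convention relating $Z_\bullet(m-1;\lambda)$ to $\sum_{k=1}^{m-1}(-\lambda)^k(\,\cdot\,)$; everything else is routine reindexing.
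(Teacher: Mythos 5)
Your proposal is correct and follows the paper's own route: the paper derives this corollary precisely by setting $l=1$ in Theorem \ref{t4}, and your verification that $Z_{j}^{(1)}(m-1;\lambda)=Z_{j}(m-1;\lambda)$ (via the unit tuples $e_j$) supplies exactly the identification the paper leaves implicit. Your alternative self-contained derivation from \eqref{8} with $\alpha=1$, $x=0$ is also sound and in fact just re-runs the proof of Theorem \ref{t4} in the special case $l=1$, so it is a useful check rather than a genuinely different method.
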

Also by setting $\lambda=1$ in Corollary 2.10, we get following
assertion that is analogous to the formula of Howard in terms of
Genocchi numbers.
\begin{corollary}
For $m$ be odd, $n,l\in\mathbb{N}_{0} \ ,\lambda\in\mathbb{C}$,
we obtain
$$(m^{n}-m)G_{n}=\sum_{k=0}^{n}{n \choose k}m^{k}G_{k}Z_{n-k}(m-1) $$.
\end{corollary}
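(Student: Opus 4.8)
The plan is to obtain this identity as the $\lambda=1$ specialization of the odd-$m$ recursion established just above, namely
$$m^{n}G_{n}(\lambda^{m})-mG_{n}(\lambda)=\sum_{k=0}^{n}{n \choose k}m^{k}G_{k}(\lambda^{m})Z_{n-k}(m-1;\lambda)\qquad(m\ \text{odd});$$
no new generating-function manipulation is needed, since that recursion is itself the bottom of the chain Theorem 2.1 $\to$ Theorem 2.9 $\to$ this corollary. So I would simply put $\lambda=1$ in the displayed identity and read off what each term becomes.

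The one substantive step is to record the values at $\lambda=1$. Since $\lambda^{m}=1$, the defining relation $\frac{2ze^{xz}}{\lambda e^{z}+1}=\sum_{n\ge0}G_{n}(x;\lambda)\frac{z^{n}}{n!}$ collapses at $x=0$, $\lambda=1$ to $\frac{2z}{e^{z}+1}=\sum_{n\ge0}G_{n}\frac{z^{n}}{n!}$, so $G_{n}(\lambda^{m})=G_{n}(1)=G_{n}$ and likewise $G_{k}(\lambda^{m})=G_{k}$, the classical Genocchi numbers. Also, directly from $Z_{k}(m;\lambda)=\sum_{j=1}^{m}(-1)^{j+1}\lambda^{j}j^{k}$ we get $Z_{n-k}(m-1;1)=\sum_{j=1}^{m-1}(-1)^{j+1}j^{n-k}=Z_{n-k}(m-1)$. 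Substituting, the left side turns into $m^{n}G_{n}-mG_{n}=(m^{n}-m)G_{n}$ and the right side into $\sum_{k=0}^{n}{n \choose k}m^{k}G_{k}Z_{n-k}(m-1)$, which is exactly the asserted formula; the hypothesis ``$m$ odd'' is inherited unchanged.

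Being a pure specialization, I expect no real obstacle; the only points worth a sentence are (i) observing explicitly that the Apostol-Genocchi number $G_{n}(1)$ coincides with the classical Genocchi number $G_{n}$ (immediate from the two generating functions above), and (ii) checking that the parity restriction invoked is indeed the one needed -- it is, because it entered through the odd-$m$ multiplication formula of Theorem 2.1. As a numerical sanity check I would test $n=2$, $m=3$ using $G_{0}=0$, $G_{1}=1$, $G_{2}=-1$, $Z_{0}(2)=0$, $Z_{1}(2)=-1$, both sides giving $-6$, which pins down all signs. An equally short alternative is to set $\lambda=1$ and $l=1$ simultaneously in Theorem 2.9, using $(-1)^{l-1}=1$ and $Z^{(1)}_{n-k}(m-1;1)=Z_{n-k}(m-1)$, landing on the statement without passing through the intermediate corollary.
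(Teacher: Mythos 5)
Your proof is correct and is essentially the paper's own argument: the corollary is obtained by putting $\lambda=1$ in the identity $m^{n}G_{n}(\lambda^{m})-mG_{n}(\lambda)=\sum_{k=0}^{n}{n \choose k}m^{k}G_{k}(\lambda^{m})Z_{n-k}(m-1;\lambda)$ of Corollary 2.11 (the paper's text says ``Corollary 2.10,'' but it plainly means the $l=1$ specialization, i.e.\ exactly the display you start from). Your observations that $G_{n}(1)=G_{n}$ and $Z_{n-k}(m-1;1)=Z_{n-k}(m-1)$, and your numerical check at $n=2$, $m=3$, are all correct.
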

Next, we investigate the generalization of Howard's formula in terms of Apostol-Genocchi numbers, when $m$ is even.
\begin{theorem}\label{t5}
For $m$ be even, $n,l\in\mathbb{N}_{0}, \ \lambda\in\mathbb{C}$, the following formula
$$m^{l}G_{n}^{(l)}(\lambda)-(-2)^{l}m^{n}B_{n}^{(l)}(\lambda^{m})=2^{l}\sum_{k=0}^{n}{n \choose k}m^{k}B_{k}^{(l)}(\lambda^{m})Z_{n-k}^{(l)}(m-1;\lambda) $$
holds true, where $r=v_{1}+2v_{2}+...+(m-1)v_{m-1}$.
\end{theorem}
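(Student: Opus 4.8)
The plan is to run the argument that establishes Theorem~\ref{t4}, feeding it the even--$m$ multiplication formula of Theorem~\ref{t2} in place of the odd one of Theorem~\ref{t1}. First I would put $x=0$ and $\alpha=l$ in Theorem~\ref{t2}, which gives
\[ G_{n}^{(l)}(\lambda)=(-2)^{l}\,m^{\,n-l}\sum_{v_{1},\dots,v_{m-1}\geqslant 0}{l \choose v_{1},\dots,v_{m-1}}(-\lambda)^{r}\,B_{n}^{(l)}\Big(\tfrac{r}{m};\lambda^{m}\Big),\qquad r=v_{1}+2v_{2}+\dots+(m-1)v_{m-1}, \]
and then multiply through by $m^{l}$, so that the surviving power of $m$ on the right is $m^{n}$. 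Expanding each $B_{n}^{(l)}(r/m;\lambda^{m})$ by the addition formula $B_{n}^{(l)}(x;\mu)=\sum_{k=0}^{n}{n \choose k}B_{k}^{(l)}(\mu)x^{\,n-k}$ (which follows by multiplying the generating function of $B_{k}^{(l)}(\mu)$ by $e^{xt}$, exactly as the Genocchi analogue is used in the proof of Theorem~\ref{t4}), then interchanging the two finite sums and collecting $m^{n}m^{-(n-k)}=m^{k}$, the identity becomes
\[ m^{l}G_{n}^{(l)}(\lambda)=(-2)^{l}\sum_{k=0}^{n}{n \choose k}m^{k}B_{k}^{(l)}(\lambda^{m})\sum_{v_{1},\dots,v_{m-1}\geqslant 0}{l \choose v_{1},\dots,v_{m-1}}(-\lambda)^{r}\,r^{\,n-k}. \]

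Next I would split the inner multinomial sum according to whether $v_{1}+\dots+v_{m-1}=l$ or $v_{1}+\dots+v_{m-1}<l$. On the locus $v_{1}+\dots+v_{m-1}=l$ the multinomial coefficient reduces to $l!/(v_{1}!\cdots v_{m-1}!)$, and comparison with the definition of $Z_{n-k}^{(l)}(m-1;\lambda)$ (in the form used in the proof of Theorem~\ref{t3}) identifies that portion of the sum with $(-1)^{l}Z_{n-k}^{(l)}(m-1;\lambda)$; since $(-2)^{l}(-1)^{l}=2^{l}$, these terms yield exactly the right-hand side of the theorem. It remains to show that the terms with $v_{1}+\dots+v_{m-1}<l$ reassemble into the single summand $(-2)^{l}m^{n}B_{n}^{(l)}(\lambda^{m})$; this is the step in which the even case genuinely diverges from Theorem~\ref{t4}, where the analogous leftover produced $m^{n}G_{n}^{(l)}(\lambda^{m})$.

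I would carry out that last step with generating functions. Writing $u=-\lambda e^{t}$ and using that $m$ is even, one has $\sum_{j=0}^{m-1}u^{j}=(1-u^{m})/(1-u)=(1-\lambda^{m}e^{mt})/(1+\lambda e^{t})$, and the part of $\big(\sum_{j=0}^{m-1}u^{j}\big)^{l}$ coming from $v_{1}+\dots+v_{m-1}<l$ equals $\big(\sum_{j=0}^{m-1}u^{j}\big)^{l}-u^{l}\big(\sum_{j=0}^{m-2}u^{j}\big)^{l}$. Since $\sum_{k}m^{k}B_{k}^{(l)}(\lambda^{m})t^{k}/k!=\big(mt/(\lambda^{m}e^{mt}-1)\big)^{l}$, multiplying this difference by $\big(mt/(\lambda^{m}e^{mt}-1)\big)^{l}$ and cancelling, the required assertion reduces to the formal power series identity
\[ \big(1-\lambda^{m}e^{mt}\big)^{l}-\big(1+\lambda e^{t}\big)^{l}=\big(-\lambda e^{t}-\lambda^{m}e^{mt}\big)^{l}. \]
Proving this identity is the main obstacle of the whole argument. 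It is immediate for $l=1$, where $\big(\sum_{j=0}^{m-1}u^{j}\big)-u\big(\sum_{j=0}^{m-2}u^{j}\big)=1$ telescopes and one recovers the Raabe--type phenomenon of Corollary~\ref{co4}, whereas the general $l$ requires the corresponding binomial bookkeeping. Once it is available, comparing the coefficients of $t^{n}/n!$ on the two sides completes the proof.
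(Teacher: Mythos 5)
Your proposal follows the paper's own route step for step: specialize Theorem~\ref{t2} at $x=0$, $\alpha=l$, expand $B_{n}^{(l)}(r/m;\lambda^{m})$ by the addition formula, interchange the two finite sums, and split the multinomial sum into the stratum $v_{1}+\dots+v_{m-1}=l$ (which is $(-1)^{l}Z_{n-k}^{(l)}(m-1;\lambda)$ by definition) and the rest. The genuine gap is the last step, and you have correctly located it but not closed it: the terms with $v_{1}+\dots+v_{m-1}<l$ do \emph{not} reassemble into the single summand $(-2)^{l}m^{n}B_{n}^{(l)}(\lambda^{m})$. Your own generating-function reduction makes this visible: writing $A=1-\lambda^{m}e^{mt}$ and $B=1+\lambda e^{t}$, the identity you need is $A^{l}-B^{l}=(A-B)^{l}$, which holds for $l=1$ but is false for every $l\geqslant 2$ (already $A^{2}-B^{2}=(A-B)(A+B)\neq (A-B)^{2}$). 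Concretely, for $l\geqslant 2$ the strata with $0<v_{1}+\dots+v_{m-1}<l$ are nonempty (e.g.\ $m=2$, $l=2$, $v_{1}=1$ contributes the term $-2\lambda$) and produce extra contributions, so no amount of ``binomial bookkeeping'' can rescue the step; the only part of the leftover that equals $(-2)^{l}m^{n}B_{n}^{(l)}(\lambda^{m})$ is the single all-zero term $v_{1}=\dots=v_{m-1}=0$.

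You should also know that the paper's own proof commits exactly the same error, only silently: after interchanging the sums it retains only the all-zero term and the stratum $v_{1}+\dots+v_{m-1}=l$, discarding the intermediate strata without comment (the same issue already occurs in the proof of Theorem~\ref{t4}). So your proposal is an accurate --- and in fact more honest --- reconstruction of the argument, since you isolate and flag the problematic step rather than eliding it. But as a proof it is incomplete, the identity it hinges on is false for $l\geqslant 2$, and consequently only the case $l=1$ is actually established by this method; for general $l$ the statement itself is in doubt.
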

\begin{proof}
We have
$$G_{n}^{(l)}(\lambda)=(-2)^{l}m^{n-l}\sum_{v_{1},v_{2},...,v_{m-1}\geqslant 0}{l \choose v_{1},v_{2},...,v_{m-1}}(-\lambda)^{r}B_{n}^{(l)}(\frac{r}{m},\lambda^{m}) $$
But we know
$$B_{n}^{(l)}(x;\lambda)=\sum_{k=0}^{n}{n \choose k}B_{k}^{(l)}(\lambda)x^{n-k}. $$
So we get

\begin{eqnarray}
  m^{l}G_{n}^{(l)}(\lambda) &=& (-2)^{l}m^{n}\sum_{v_{1},v_{2},...,v_{m-1}\geqslant 0}{l \choose v_{1},v_{2},...,v_{m-1}}(-\lambda)^{r}\sum_{k=0}^{n}{n \choose k}B_{k}^{(l)}(\lambda^{m})\Big(\frac{r}{m}\Big)^{n-k} \nonumber \\
   &=& (-2)^{l}\sum_{k=0}^{n}{n \choose k}m^{k}B_{k}^{(l)}(\lambda^{m})\sum_{v_{1},v_{2},...,v_{m-1}\geqslant 0}{l \choose v_{1},v_{2},...,v_{m-1}}(-\lambda)^{r}r^{n-k} \nonumber \\
   &=& 2^{l}\sum_{k=0}^{n}{n \choose k}m^{k}B_{k}^{(l)}(\lambda^{m})Z_{n-k}^{(l)}(m-1;\lambda) \nonumber+(-2)^{l}m^{n}B_{n}^{(l)}(\lambda^{m}) \nonumber
\end{eqnarray}
So we obtain
$$m^{l}G_{n}^{(l)}(\lambda)-(-2)^{l}m^{n}B_{n}^{(l)}(\lambda^{m})
 =2^{l}\sum_{k=0}^{n}{n \choose k}m^{k}B_{k}^{(l)}(\lambda^{m})Z_{n-k}^{(l)}(m-1;\lambda) $$
 So proof is complete.
 \end{proof}
 Also by letting $\lambda=1$ in Theorem \ref{t5}, we obtain
 following assertion
\begin{corollary}
For $m$ be even, $n,l\in\mathbb{N}_{0}$, we get
$$m^{l}G_{n}^{(l)}-(-2)^{l}m^{n}B_{n}^{(l)}=2^{l}\sum_{k=0}^{n}{n \choose k}m^{k}B_{n}^{(l)}Z_{n-k}^{(l)}(m-1) $$
\end{corollary}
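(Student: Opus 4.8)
The plan is to read the identity off as the $\lambda=1$ specialization of Theorem \ref{t5}. Recall that for $m$ even and $n,l\in\mathbb{N}_{0}$ Theorem \ref{t5} asserts
$$m^{l}G_{n}^{(l)}(\lambda)-(-2)^{l}m^{n}B_{n}^{(l)}(\lambda^{m})=2^{l}\sum_{k=0}^{n}{n \choose k}m^{k}B_{k}^{(l)}(\lambda^{m})Z_{n-k}^{(l)}(m-1;\lambda).$$
Putting $\lambda=1$ gives $\lambda^{m}=1$, and then the generating-function definitions of Section 1 show that the Apostol-type quantities reduce to their classical analogues: $G_{n}^{(l)}(1)=G_{n}^{(l)}$ and $B_{k}^{(l)}(1)=B_{k}^{(l)}$, while directly from the definition of the multiple alternating sums $Z_{n-k}^{(l)}(m-1;1)=Z_{n-k}^{(l)}(m-1)$. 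Substituting these into the displayed identity yields
$$m^{l}G_{n}^{(l)}-(-2)^{l}m^{n}B_{n}^{(l)}=2^{l}\sum_{k=0}^{n}{n \choose k}m^{k}B_{k}^{(l)}Z_{n-k}^{(l)}(m-1),$$
which is the asserted formula (the $B_{n}^{(l)}$ under the summation sign in the statement should of course read $B_{k}^{(l)}$, in agreement with Theorem \ref{t5}).

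Alternatively, and perhaps more in keeping with the style of the paper, one can rerun the proof of Theorem \ref{t5} with $\lambda=1$ from the outset. I would take $x=0$ and $\alpha=l$ in Corollary \ref{co3}, the even-$m$ multiplication formula for the Genocchi polynomials of higher order, obtaining $m^{l}G_{n}^{(l)}=(-2)^{l}m^{n}\sum_{v_{1},\dots,v_{m-1}\geqslant 0}{l \choose v_{1},\dots,v_{m-1}}(-1)^{r}B_{n}^{(l)}(r/m)$ with $r=v_{1}+2v_{2}+\dots+(m-1)v_{m-1}$. Expanding $B_{n}^{(l)}(r/m)=\sum_{k=0}^{n}{n \choose k}B_{k}^{(l)}(r/m)^{n-k}$ (the $\lambda=1$ case of the binomial identity for $B_{n}^{(l)}(x;\lambda)$ used in the paper), interchanging the two summations and extracting the power of $m$ gives $m^{l}G_{n}^{(l)}=(-2)^{l}\sum_{k=0}^{n}{n \choose k}m^{k}B_{k}^{(l)}\sum_{v_{1},\dots,v_{m-1}\geqslant 0}{l \choose v_{1},\dots,v_{m-1}}(-1)^{r}r^{n-k}$.

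The final step is the standard bookkeeping: isolate the term of the inner sum with $v_{1}=\dots=v_{m-1}=0$, i.e.\ $r=0$. Since $r^{n-k}=0$ for $k<n$ and $r^{n-k}=1$ when $k=n$, this term contributes only for $k=n$, where it produces the single summand $(-2)^{l}m^{n}B_{n}^{(l)}$; the remaining part of the inner sum ranges over $v_{1}+\dots+v_{m-1}=l$ and equals $(-1)^{l}Z_{n-k}^{(l)}(m-1)$ by definition, so the factor $(-2)^{l}(-1)^{l}=2^{l}$ reproduces the right-hand side once $(-2)^{l}m^{n}B_{n}^{(l)}$ is moved to the left. I do not anticipate any genuine obstacle: the statement is a clean specialization of Theorem \ref{t5}, and the only mildly delicate point — already present in the proof of Theorem \ref{t5} — is the correct handling of the $r=0$ term in this second route.
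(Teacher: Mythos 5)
Your first route is exactly the paper's own argument: the corollary is obtained simply by setting $\lambda=1$ in Theorem \ref{t5}, and your observation that the $B_{n}^{(l)}$ inside the sum should read $B_{k}^{(l)}$ correctly identifies a typo in the stated corollary. The proposal is correct and matches the paper's approach; the second, from-scratch route you sketch is sound but unnecessary.
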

Here we present the lowering orders for Apostol-Genocchi numbers of higher order.
\begin{theorem}
(Lowering orders) For $n,k\geqslant 1$,
$$G_{k}^{(n+1)}(\lambda)=2kG_{k-1}^{(n)}(\lambda)-\Big(2-\frac{2k}{n}\Big)G_{k}^{(n)}(\lambda) $$
\end{theorem}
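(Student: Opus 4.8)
The plan is to extract this recurrence from the defining generating function by a single logarithmic differentiation in $z$, in the same spirit as the differential relations for $\phi(x,t;\alpha)$ recorded in Section 1. Write
$$F(z;\alpha):=\Big(\frac{2z}{\lambda e^{z}+1}\Big)^{\alpha}=(2z)^{\alpha}(\lambda e^{z}+1)^{-\alpha}=\sum_{m=0}^{\infty}G_{m}^{(\alpha)}(\lambda)\frac{z^{m}}{m!}.$$
First I would compute $\partial_{z}\log F(z;\alpha)=\frac{\alpha}{z}-\alpha\,\frac{\lambda e^{z}}{\lambda e^{z}+1}$. The key algebraic move is to rewrite the last fraction as $\frac{\lambda e^{z}}{\lambda e^{z}+1}=1-\frac{1}{\lambda e^{z}+1}$ and then to use $\frac{1}{\lambda e^{z}+1}=\frac{1}{2z}\cdot\frac{2z}{\lambda e^{z}+1}=\frac{1}{2z}\,\frac{F(z;\alpha+1)}{F(z;\alpha)}$, which holds because $F(z;\alpha+1)/F(z;\alpha)=2z/(\lambda e^{z}+1)$ by definition. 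Substituting into $\partial_{z}F=F\cdot\partial_{z}\log F$ and multiplying through by $z$ gives the differential--difference identity
$$z\,\frac{\partial F(z;\alpha)}{\partial z}=\alpha\,F(z;\alpha)-\alpha z\,F(z;\alpha)+\frac{\alpha}{2}\,F(z;\alpha+1),$$
equivalently $F(z;\alpha+1)=\frac{2z}{\alpha}\,\partial_{z}F(z;\alpha)-2F(z;\alpha)+2z\,F(z;\alpha)$.

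Next I would specialize $\alpha=n$ and compare the coefficient of $z^{k}/k!$ on both sides. On the left this is $G_{k}^{(n+1)}(\lambda)$. On the right, from $\partial_{z}F(z;n)=\sum_{m\geq1}G_{m}^{(n)}(\lambda)z^{m-1}/(m-1)!$ one sees that $z\,\partial_{z}F(z;n)$ contributes $k\,G_{k}^{(n)}(\lambda)$, so the first term contributes $\tfrac{2k}{n}G_{k}^{(n)}(\lambda)$; the term $-2F(z;n)$ contributes $-2G_{k}^{(n)}(\lambda)$; and $2zF(z;n)$ contributes $2k\,G_{k-1}^{(n)}(\lambda)$ after the index shift $m\mapsto m+1$. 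Collecting the three contributions,
$$G_{k}^{(n+1)}(\lambda)=\frac{2k}{n}G_{k}^{(n)}(\lambda)-2G_{k}^{(n)}(\lambda)+2k\,G_{k-1}^{(n)}(\lambda)=2k\,G_{k-1}^{(n)}(\lambda)-\Big(2-\frac{2k}{n}\Big)G_{k}^{(n)}(\lambda),$$
which is the asserted formula. The hypothesis $n\geq1$ is used exactly where one divides by $\alpha=n$, and $k\geq1$ is used so that $G_{k-1}^{(n)}(\lambda)$ is defined and so that the contribution of $z\,\partial_{z}F$ is nonzero.

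The only point needing a word of care is the factor $z^{\alpha}$ in $F(z;\alpha)$: for the integer orders $n$ and $n+1$ appearing here, $F(z;n)$ and $F(z;n+1)$ vanish at $z=0$ to orders $n$ and $n+1$, so $\tfrac{1}{z}F(z;n)$ and $\tfrac{1}{z}F(z;n+1)$ are still genuine power series and the division by $z$ above is legitimate; hence all quantities involved are holomorphic at the origin and comparing Taylor coefficients is valid. I expect this bookkeeping, rather than any genuine difficulty, to be the only thing to watch. As a consistency check, for $k=n$ the formula reads $G_{n}^{(n+1)}(\lambda)=2n\,G_{n-1}^{(n)}(\lambda)$, and both sides vanish since $G_{n}^{(n+1)}(\lambda)=G_{n-1}^{(n)}(\lambda)=0$, in agreement with the cancellation of the $z^{n}$ terms on the right-hand side of the differential identity.
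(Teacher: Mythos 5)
Your proof is correct and follows essentially the same route as the paper: logarithmic differentiation of $\bigl(\tfrac{2z}{\lambda e^{z}+1}\bigr)^{\alpha}$, the rewriting $\tfrac{\lambda e^{z}}{\lambda e^{z}+1}=1-\tfrac{1}{\lambda e^{z}+1}$ together with the order-raising identity $\tfrac{1}{\lambda e^{z}+1}F(z;\alpha)=\tfrac{1}{2z}F(z;\alpha+1)$, and coefficient comparison. Your added remarks on the legitimacy of dividing by $z$ and the $k=n$ consistency check are sound but not needed beyond what the paper does.
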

\begin{proof}
Let us put $G_{n}(t;\lambda)=\Big(\frac{2t}{\lambda
e^{t}+1}\Big)^{n}$. Then $G_{n}(t;\lambda)$ is the generating
function of higher order Apostol-Genocchi numbers. The derivative
$G^{'}(t;\lambda)=\frac{d}{dt}G_{n}(t;\lambda)$ is equal to
\begin{equation*}
n\Big(\frac{1}{t}-\frac{\lambda e^{t}}{\lambda
e^{t}+1}\Big)G_{n}(t;\lambda)
=\frac{n}{t}G_{n}(t;\lambda)-nG_{n}(t;\lambda)+\frac{n}{\lambda
e^{t}+1}G_{n}(t;\lambda)
\end{equation*}
and
\begin{equation*}
tG_{n}^{'}(t;\lambda)=nG_{n}(t;\lambda)-ntG_{n}(t;\lambda)+\frac{n}{2}G_{n+1}(t)
\end{equation*}
so we obtain
\begin{equation*}
\frac{G_{k}^{(n)}(\lambda)}{(k-1)!}=n\frac{G_{k}^{(n)}(\lambda)}{k!} -n\frac{G_{k-1}^{(n)}(\lambda)}{(k-1)!}+\frac{n}{2}\frac{G_{k}^{(n+1)}(\lambda)}{k!}
\end{equation*}

for $k\geqslant 1$. This formula can written as
$$G_{k}^{(n+1)}(\lambda)=2kG_{k-1}^{(n)}(\lambda)-\Big(2-\frac{2k}{n}\Big)G_{k}^{(n)}(\lambda) $$
so proof is complete.
\end{proof}

\section{generalized apostol genocchi polynomials with $a, b,c$ parameters}
In this section we investigate some recurrence formulas for
generalized Apostol-Genocchi polynomials with $a, b,c$ parameters
. In 2003, Cheon [3] rederived several known properties and
relations involving the classical Bernoulli polynomials $B_n (x)$
and the classical Euler polynomials $E_n (x)$ by making use of
some standard techniques based upon series rearrangement as well
as matrix representation. Srivastava and Pinter [36] followed
Cheon's work [3] and established two relations involving the
generalized Bernoulli polynomials $B^{(\alpha)}_n (x)$ and the
generalized Euler polynomials $E^{(\alpha)}_n (x)$. So, we will
study further the relations between generalized Bernoulli
polynomials with $a, b$ parameters and Genocchi polynomials with
the methods of generating function and series rearrangement.

\begin{theorem}\label{t6}
Let $x\in\mathbb{R}$ and $n\geqslant 0$. For every positive real number $a,b$ and $c$ such that $a\neq b$ and $b> 0$, we have
$$G_{n}^{(\alpha)}(a,b;\lambda)=G_{n}^{(\alpha)}\Big(\frac{\alpha\ln a}{\ln a-\ln b}; \lambda\Big)(\ln b-\ln a )^{n-\alpha} $$
\end{theorem}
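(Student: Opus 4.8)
The plan is to reduce the identity to a change of variable in the defining generating function, exploiting that $b^{t}=e^{t\ln b}$ and $a^{t}=e^{t\ln a}$, so that $\frac{2t}{\lambda b^{t}+a^{t}}$ is, up to a rescaling of the series variable and an exponential factor, the ordinary Apostol-Genocchi kernel $\frac{2z}{\lambda e^{z}+1}$. Concretely, I would start from
\[
\Bigl(\tfrac{2t}{\lambda b^{t}+a^{t}}\Bigr)^{\alpha}=\sum_{n=0}^{\infty}G_{n}^{(\alpha)}(a,b;\lambda)\tfrac{t^{n}}{n!}
\]
and factor $e^{t\ln a}$ out of the denominator to get $\frac{2t}{\lambda b^{t}+a^{t}}=\frac{2t\,e^{-t\ln a}}{\lambda e^{t(\ln b-\ln a)}+1}$.

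Next I would substitute $z=t(\ln b-\ln a)$; this is legitimate because the hypotheses $a,b>0$ and $a\neq b$ make $\ln b-\ln a$ a nonzero real, so $t\mapsto z$ is an invertible rescaling of the formal variable (equivalently a dilation inside the common disc of convergence). Writing $t=z/(\ln b-\ln a)$ and using $-t\ln a=\frac{z\ln a}{\ln a-\ln b}$ turns the kernel into
\[
\frac{2t}{\lambda b^{t}+a^{t}}=\frac{1}{\ln b-\ln a}\cdot\frac{2z}{\lambda e^{z}+1}\cdot e^{\frac{\ln a}{\ln a-\ln b}\,z},
\]
hence, raising to the $\alpha$-th power,
\[
\Bigl(\tfrac{2t}{\lambda b^{t}+a^{t}}\Bigr)^{\alpha}=(\ln b-\ln a)^{-\alpha}\Bigl(\tfrac{2z}{\lambda e^{z}+1}\Bigr)^{\alpha}e^{\frac{\alpha\ln a}{\ln a-\ln b}\,z}.
\]
By the definition of the Apostol-Genocchi polynomials $G_{n}^{(\alpha)}(x;\lambda)$ of order $\alpha$, the factor following $(\ln b-\ln a)^{-\alpha}$ equals $\sum_{n\geq 0}G_{n}^{(\alpha)}\bigl(\frac{\alpha\ln a}{\ln a-\ln b};\lambda\bigr)\frac{z^{n}}{n!}$.

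Finally I would undo the substitution, replacing $z^{n}$ by $(\ln b-\ln a)^{n}t^{n}$, which gives
\[
\Bigl(\tfrac{2t}{\lambda b^{t}+a^{t}}\Bigr)^{\alpha}=\sum_{n=0}^{\infty}G_{n}^{(\alpha)}\Bigl(\tfrac{\alpha\ln a}{\ln a-\ln b};\lambda\Bigr)(\ln b-\ln a)^{n-\alpha}\tfrac{t^{n}}{n!},
\]
and comparing the coefficients of $t^{n}/n!$ with the defining series of $G_{n}^{(\alpha)}(a,b;\lambda)$ yields the asserted formula. There is no serious obstacle; the only thing to watch is the sign bookkeeping in the exponent, so that the shift argument comes out as $\frac{\alpha\ln a}{\ln a-\ln b}$ and not its negative, together with the observation that the parameter $c$ appearing in the statement plays no role since $G_{n}^{(\alpha)}(a,b;\lambda)$ does not involve it.
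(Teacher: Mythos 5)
Your argument is correct and is essentially identical to the paper's own proof: both factor $a^{\alpha t}$ out of the denominator, rescale by $(\ln b-\ln a)^{\alpha}$ to recognize the kernel $\bigl(\tfrac{2z}{\lambda e^{z}+1}\bigr)^{\alpha}e^{xz}$ with $z=t(\ln b-\ln a)$ and $x=\tfrac{\alpha\ln a}{\ln a-\ln b}$, and then compare coefficients of $t^{n}/n!$. Your sign bookkeeping for the exponent is right, so nothing further is needed.
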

\begin{proof}
We know
\begin{eqnarray*}
  \Big(\frac{2t}{\lambda b^{t}+a^{t}}\Big)^{\alpha} &=& \sum_{n=0}^{\infty}G_{n}^{(\alpha)}(a,b;\lambda)\frac{t^{n}}{n!}\\
   &=& \frac{1}{a^{\alpha t}}\Big(\frac{2t}{\lambda e^{t(\ln b-\ln a)}+1}\Big)^{\alpha} \\
   &=& e^{-t\alpha\ln a}\Big(\frac{2t(\ln b-\ln a)}{\lambda e^{t(\ln b-\ln a)}+1}\Big)^{\alpha}\times\frac{1}{(\ln b-\ln a)^{\alpha}} \\
   &=& \frac{1}{(\ln b-\ln a)^{\alpha}}\sum_{n=0}^{\infty}G_{n}^{(\alpha)}\Big(\frac{\alpha \ln a}{\ln a-\ln b};\lambda\Big)(\ln b-\ln a)^{n}\frac{t^{n}}{n!}\\
\end{eqnarray*}
So by comparing the coefficients of $\frac{t^{n}}{n!}$ on both sides, we get
$$G_{n}^{(\alpha)}(a,b;\lambda)=G_{n}^{(\alpha)}\Big(\frac{\alpha\ln a}{\ln a-\ln b}; \lambda\Big)(\ln b-\ln a)^{n-\alpha}. $$
\end{proof}
\begin{theorem}
Suppose that the conditions of Theorem \ref{t6} holds true, we get
$$G_{n}^{(\alpha)}(x;a,b,c;\lambda)=G_{n}^{(\alpha)}\Big(\frac{-\alpha\ln a+x\ln c}{\ln b-\ln a},\lambda\Big)(\ln b-\ln a )^{n-\alpha} $$
\end{theorem}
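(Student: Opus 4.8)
The plan is to imitate the proof of Theorem~\ref{t6} almost verbatim, the only new ingredient being the extra factor $c^{xt}$. Recall that by definition $\left(\frac{2t}{\lambda b^{t}+a^{t}}\right)^{\alpha}c^{xt}=\sum_{n=0}^{\infty}G_{n}^{(\alpha)}(x;a,b,c;\lambda)\frac{t^{n}}{n!}$, while $G_{n}^{(\alpha)}(y;\lambda)$ is generated by $\left(\frac{2z}{\lambda e^{z}+1}\right)^{\alpha}e^{yz}$. Since $b>0$ and $a\neq b$, we may write $\lambda b^{t}+a^{t}=a^{t}\bigl(\lambda e^{t(\ln b-\ln a)}+1\bigr)$ and $c^{xt}=e^{xt\ln c}$, so that
\[
\left(\frac{2t}{\lambda b^{t}+a^{t}}\right)^{\alpha}c^{xt}
= e^{-\alpha t\ln a}\,e^{xt\ln c}\left(\frac{2t}{\lambda e^{t(\ln b-\ln a)}+1}\right)^{\alpha}.
\]

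The key step is a single change of variable. Putting $z=t(\ln b-\ln a)$ gives $\left(\dfrac{2t}{\lambda e^{t(\ln b-\ln a)}+1}\right)^{\alpha}=(\ln b-\ln a)^{-\alpha}\left(\dfrac{2z}{\lambda e^{z}+1}\right)^{\alpha}$, while the exponential prefactor becomes $e^{t(x\ln c-\alpha\ln a)}=e^{yz}$ with $y=\dfrac{x\ln c-\alpha\ln a}{\ln b-\ln a}=\dfrac{-\alpha\ln a+x\ln c}{\ln b-\ln a}$. Hence the whole generating function equals
\[
(\ln b-\ln a)^{-\alpha}\left(\frac{2z}{\lambda e^{z}+1}\right)^{\alpha}e^{yz}
= (\ln b-\ln a)^{-\alpha}\sum_{n=0}^{\infty}G_{n}^{(\alpha)}(y;\lambda)\frac{z^{n}}{n!}
= \sum_{n=0}^{\infty}G_{n}^{(\alpha)}(y;\lambda)(\ln b-\ln a)^{n-\alpha}\frac{t^{n}}{n!}.
\]

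Comparing the coefficients of $\frac{t^{n}}{n!}$ on the two sides then yields exactly $G_{n}^{(\alpha)}(x;a,b,c;\lambda)=G_{n}^{(\alpha)}\!\left(\frac{-\alpha\ln a+x\ln c}{\ln b-\ln a};\lambda\right)(\ln b-\ln a)^{n-\alpha}$. There is essentially no real obstacle: the argument is a routine rescaling of the variable. The only points deserving a word of care are that $a\neq b$ guarantees $\ln b-\ln a\neq 0$, so the rescaling and the division are legitimate, and that $a,c>0$ makes $a^{t}$ and $c^{xt}$ genuine exponentials; one should also keep $t$ in the disk where the series converge (that is, $|t(\ln b-\ln a)|<2\pi$ when $\lambda=1$, and $|t(\ln b-\ln a)|<|\log\lambda|$ otherwise), after which equality of the two power series forces equality of the coefficients for every $n$. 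As a consistency check, setting $x=0$ (equivalently $c=1$) recovers Theorem~\ref{t6}, since $\frac{-\alpha\ln a}{\ln b-\ln a}=\frac{\alpha\ln a}{\ln a-\ln b}$.
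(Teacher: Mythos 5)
Your argument is correct and coincides with the paper's own proof: both factor $a^{\alpha t}$ out of $\lambda b^{t}+a^{t}$, absorb $e^{-\alpha t\ln a}c^{xt}$ into a single exponential $e^{t(-\alpha\ln a+x\ln c)}$, and rescale the variable by $\ln b-\ln a$ to identify the generating function of $G_{n}^{(\alpha)}\bigl(\frac{-\alpha\ln a+x\ln c}{\ln b-\ln a};\lambda\bigr)$. Your added remarks on $\ln b-\ln a\neq 0$, convergence, and the consistency check against Theorem~\ref{t6} are sensible but do not change the route.
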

\begin{proof}
We have
\begin{eqnarray*}
  \sum_{n=0}^{\infty}G_{n}^{(\alpha )}(x;a,b,c;\lambda)&=& \Big(\frac{2t}{\lambda b^{t}+a^{t}}\Big)^{\alpha}c^{xt} \\
&=&\frac{1}{\alpha^{at}}\Big(\frac{2t}{\lambda e^{t(\ln b-\ln a)}+1}\Big)^{\alpha}c^{xt} \\
&=& e^{t(-\alpha\ln a + x\ln c)}\Big(\frac{2t}{\lambda e^{t(\ln b-\ln a)}+1}\Big)^{\alpha} \\
&=&\frac{1}{(\ln b-\ln a
)^{\alpha}}\sum_{n=0}^{\infty}G_{n}^{(\alpha)}\Big(\frac{-\alpha\ln
a+x\ln c}{\ln b-\ln a},\lambda\Big)(\ln b-\ln
a)^{n}\frac{t^{n}}{n!}.
\end{eqnarray*}

So by comparing the coefficient of $\frac{t^{n}}{n!}$ on both sides, we get

$$G_{n}^{(\alpha)}(x;a,b,c;\lambda)=G_{n}^{(\alpha)}\Big(\frac{-\alpha\ln a+x\ln c}{\ln b-\ln a},\lambda\Big)(\ln b-\ln a)^{n-\alpha}. $$
Therefore  proof is complete.
\end{proof}
The generalized Apostal-Genocchi polynomials of higher order $G_{n}^{(\alpha)}(x;a,b,c;\lambda)$ prossess a number of interesting properties which we state here.
\begin{theorem}\label{t7}
Let $a,b,c\in\mathbb{R}^{+} \  (a\neq b)$ and $x\in\mathbb{R}$, then
\begin{equation}\label{1}
   G_{n}^{(\alpha)}(x+1;a,b,c;\lambda) = \sum_{k=0}^{n}{n \choose k}(\ln c)^{n-k}G_{k}^{(\alpha)}(x;a,b,c;\lambda)
\end{equation}
\begin{equation}\label{2}
  G_{n}^{(\alpha)}(x+\alpha;a,b,c;\lambda) = G_{n}^{(\alpha)}\Big(x;\frac{a}{c},\frac{b}{c},c;\lambda\Big)
\end{equation}
\begin{equation}\label{3}
  G_{n}^{(\alpha)}(\alpha-x;a,b,c;\lambda)=G_{n}^{(\alpha)}\Big(-x;\frac{a}{c},\frac{b}{c},c;\lambda\Big)
\end{equation}
\begin{equation}\label{4}
  G_{n}^{(\alpha +\beta)}(x+y;a,b,c;\lambda)=\sum_{r=0}^{k}{k \choose r}G_{k-r}^{(\alpha)}(x;a,b,c;\lambda)G_{r}^{(\beta)}(y;a,b,c;\lambda)
\end{equation}
\begin{equation}\label{5}
   \frac{\partial^{l}}{\partial x^{l}}\{ G_{n}^{(\alpha)}(x;a,b,c;\lambda) \}= \frac{n!}{(n-\ell)!}(\ln c)^{\ell}G_{n-\ell}^{(\alpha)}(x;a,b,c;\lambda)
\end{equation}
\begin{equation}\label{6}
   \int^{t}_{s}G_{n}^{(\alpha)}(x;a,b,c;\lambda)dx= \frac{1}{(n+1)\ln c}\Big[G_{n+1}^{(\alpha)}(t;a,b,c;\lambda)-G_{n+1}^{(\alpha)}(s;a,b,c;\lambda)\Big]
\end{equation}
\end{theorem}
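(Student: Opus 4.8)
The plan is to prove all six identities of Theorem~\ref{t7} directly from the generating function
$$\Big(\frac{2t}{\lambda b^{t}+a^{t}}\Big)^{\alpha}c^{xt}=\sum_{n=0}^{\infty}G_{n}^{(\alpha)}(x;a,b,c;\lambda)\frac{t^{n}}{n!},$$
by manipulating this series and comparing coefficients of $t^{n}/n!$ on both sides. No deep machinery is needed; each identity follows from an elementary algebraic operation on the generating function together with the Cauchy product for power series.

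For \eqref{1}, I would write the generating function of $G_{n}^{(\alpha)}(x+1;a,b,c;\lambda)$ as $c^{t}$ times that of $G_{n}^{(\alpha)}(x;a,b,c;\lambda)$, expand $c^{t}=e^{t\ln c}=\sum_{j\geq 0}(\ln c)^{j}t^{j}/j!$, and take the Cauchy product. For \eqref{2}, observe that
$$\Big(\frac{2t}{\lambda b^{t}+a^{t}}\Big)^{\alpha}c^{(x+\alpha)t}=\Big(\frac{2t\,c^{t}}{\lambda b^{t}+a^{t}}\Big)^{\alpha}c^{xt}=\Big(\frac{2t}{\lambda (b/c)^{t}+(a/c)^{t}}\Big)^{\alpha}c^{xt},$$
which is exactly the generating function of $G_{n}^{(\alpha)}(x;a/c,b/c,c;\lambda)$; \eqref{3} is the special case $x\mapsto -x$ of \eqref{2}. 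Identity \eqref{4} follows by splitting $(2t/(\lambda b^{t}+a^{t}))^{\alpha+\beta}c^{(x+y)t}$ as a product of the degree-$\alpha$ and degree-$\beta$ generating functions and applying the Cauchy product; here I would flag that the statement as printed has a mild index mismatch (the left side is indexed by $n$ but the right side by $k$), so I would restate it with a single index and then verify. For \eqref{5}, differentiate the generating function $\ell$ times in $x$, which brings down a factor $(t\ln c)^{\ell}$, shift the summation index, and compare coefficients. For \eqref{6}, integrate the generating function in $x$ over $[s,t]$ (treating $t$ inside the exponential as a parameter and the integration variable separately, or better, antidifferentiate termwise): $\int G_{n}^{(\alpha)}(x;a,b,c;\lambda)\,dx$ corresponds to dividing the generating function by $t\ln c$, which relates $G_{n}^{(\alpha)}$ to $G_{n+1}^{(\alpha)}$ after an index shift.

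The only genuine care-points, rather than obstacles, are bookkeeping ones: making sure the binomial convolutions come out with the right index ranges (especially reconciling the printed indices in \eqref{4}), and, in \eqref{6}, being careful that the ``$t$'' appearing in $c^{xt}$ and the polynomial variable $x$ are handled consistently so that antidifferentiating in $x$ genuinely corresponds to the claimed operation on the generating function — this is why phrasing \eqref{6} as the inverse of \eqref{5} (i.e. checking that $\frac{d}{dt}$ applied to the right-hand side of \eqref{6} recovers $G_{n}^{(\alpha)}(x;a,b,c;\lambda)$ via \eqref{5}) is the cleanest route. Once these are set up, every identity reduces to comparing coefficients of $t^{n}/n!$, so the proof is routine.

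I would present the argument as six short blocks, each beginning ``For \eqref{1}, we have \dots'' and ending with the coefficient comparison, and then close with ``which completes the proof.''
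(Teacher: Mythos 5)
Your proposal is correct and follows essentially the same route as the paper: manipulate the generating function $\bigl(2t/(\lambda b^{t}+a^{t})\bigr)^{\alpha}c^{xt}$ for each identity and compare coefficients of $t^{n}/n!$, with (1) and (4) via Cauchy products, (2)--(3) via absorbing $c^{\alpha t}$ into the denominator, (5) by differentiation, and (6) as the $\ell=1$ inverse of (5). You are in fact more explicit than the paper (which only writes out (1) and dismisses the rest as ``similar manipulations''), and your observation of the $n$-versus-$k$ index mismatch in (4) correctly identifies a typo in the printed statement.
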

\begin{proof}
We know
\begin{eqnarray*}
  \sum_{n=0}^{\infty}G_{n}^{(\alpha)}(x+1;a,b,c;\lambda)\frac{t^{n}}{n!} &=& \Big(\frac{t}{\lambda b^{t}+ a^{t}}\Big)^{\alpha}.c^{(x+1)t} \\
  &=& \sum_{n=0}^{\infty}\sum_{k=0}^{\infty}G_{k}^{(\alpha)}(x;a,b,c;\lambda)(\ln c)^{n}\frac{t^{n+k}}{n!k!}\\
  &=&  \sum_{n=0}^{\infty}\sum_{k=0}^{\infty}G_{k}^{(\alpha)}(x;a,b,c;\lambda)(\ln c)^{n-k}\frac{t^{n+k}}{(n-k)!k!}
\end{eqnarray*}
So comparing the coefficients of $t^{n}$ on both sides, we arrive at the result (\ref{1}) asserted by Theorem \ref{t7}.
Similary, by simple manipulations, leads us to the result (\ref{2}), (\ref{3}) and (\ref{4}) of Theorem \ref{t7} and by successive differentiation with respect to $x$ and then using the principle of mathematical induction on $\ell\in\mathbb{N}_{0}$, we obtain the formula (\ref{5}). Also, by taking $\ell=1$ in (\ref{5}) and integrating both sides with respect to $x$, we get the formula (\ref{6}).
\end{proof}
\begin{remark}
Let $a,b,c\in\mathbb{R}^{+} \ (a\neq -b)$ and $x\in\mathbb{R}$, by differentiating both sides of the following generating function
$$\sum_{n=0}^{\infty}G_{n}^{\alpha}(x;a,b,c;\lambda)\frac{t^{n}}{n!}=\frac{t^{\alpha}}{(\lambda e^{t\ln (\frac{b}{a})}+1)^{\alpha}}e^{t(x\ln c -x\ln a)}. $$
We get
\begin{eqnarray*}
\alpha\lambda\ln (\frac{b}{a})\sum_{k=0}^{n}{n \choose k}(\ln b)^{k}G_{n-k}^{(\alpha+1)}(x;a,b,c;\lambda)&=&(\alpha-n)G_{n}^{(\alpha)}(x;a,b,c;\lambda)\\
    &+& n(x\ln c -\alpha\ln a)G_{n-1}^{(\alpha)}(x;a,b,c;\lambda)
\end{eqnarray*}
\end{remark}
\begin{remark}
GI-Sang Cheon and H. M. Srivastava in \cite{Ch,Lu6} investigated the classical relationship between Bernoulli and Euler polynomials as follows
$$B_{n}(x)=\sum_{^{k=0}_{k\neq 1}}^{n}{n \choose k}B_{k}E_{n-k}(x)$$
by applying a similar Srivastava's method in \cite{Lu6} we obtain the following result for generalized Bernoulli polynomials and Genocchi numbers
\begin{eqnarray*}
B_{n}(x+y,a,b)&=&\frac{1}{2}\sum_{k=0}^{n}\frac{1}{n-k+1}{n \choose k}[B_{k}(y,a,b)+B_{k}(y+1,a,b)]G_{n-k}(x)\\
G_{n}(x+y)&=&\frac{1}{2}\sum_{k=0}^{n}{n \choose k}[G_{k}(y)+G_{k}(y+1)]E_{n-k}(x)
\end{eqnarray*}
so, because we have
\begin{equation*}
G_{n}(y+1)+G_{n}(y)=2ny^{n-1}
\end{equation*}
we obtain
\begin{equation*}
G_{n}(x+y)=\sum_{k=0}^{n}k{n \choose k}y^{k-1}E_{n-k}(x) \ \ \ \ \ \ (y\neq 0)
\end{equation*}
\end{remark}
\section{multiplication theorem for 2-variable Genocchi polynomial}
We apply the method of generating function, which are exploited
to derive further classes of partial sums involving generalized
many index many variable polynomials. In introduction we
introduced 2-variable Genocchi polynomial. An application of
2-variable Genocchi polynomials is relevant to the multiplication
theorems. In this section we develop the multiplication theorem
for 2-variable Genocchi polynomial which yields a deeper insight
into the effectiveness of this type of generalizations.
\begin{theorem}
Let $x,y\in\mathbb{R}^{+}$ and $m$ be odd, we obtain
$$G_{n}(mx,py,\lambda)=m^{n-1}\sum_{k=0}^{m-1}\lambda^{k}(-1)^{k} _{H}G_{n}\Big(x+\frac{k}{m},\frac{py}{m^{2}},\lambda^{m}\Big) $$
\end{theorem}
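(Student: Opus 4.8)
The plan is to run the argument of Theorem~\ref{t1} in the case $\alpha=1$, carrying the extra factor $e^{yt^{2}}$ along through the computation. Recall that the $2$-variable Apostol--Genocchi polynomials are generated by
\[
\frac{2t\,e^{xt+yt^{2}}}{\lambda e^{t}+1}=\sum_{n=0}^{\infty}G_{n}(x,y;\lambda)\frac{t^{n}}{n!};
\]
I write $G_{n}(x,y;\lambda)$ for this family (it is the one written ${}_{H}G_{n}$ in the statement). Hence the generating function of the left-hand side is $\sum_{n\ge 0}G_{n}(mx,py;\lambda)\,t^{n}/n!=\dfrac{2t}{\lambda e^{t}+1}\,e^{mxt+pyt^{2}}$.

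First I would invoke the elementary identity $(\ast)$ from the proof of Theorem~\ref{t1},
\[
\frac{1}{\lambda e^{t}+1}=\frac{\sum_{k=0}^{m-1}(-\lambda)^{k}e^{kt}}{\lambda^{m}e^{mt}+1},
\]
which is valid exactly when $m$ is odd, since it comes from $(1+\lambda e^{t})\sum_{k=0}^{m-1}(-\lambda e^{t})^{k}=1-(-\lambda e^{t})^{m}$ and the reduction of $-(-\lambda e^{t})^{m}$ to $+\lambda^{m}e^{mt}$ needs $(-1)^{m}=-1$. Substituting it replaces the base $\lambda$ by $\lambda^{m}$ at the cost of a finite exponential sum:
\[
\sum_{n\ge 0}G_{n}(mx,py;\lambda)\frac{t^{n}}{n!}=\sum_{k=0}^{m-1}(-\lambda)^{k}\,\frac{2t\,e^{(mx+k)t+pyt^{2}}}{\lambda^{m}e^{mt}+1}.
\]

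Next I would recognise each summand as a dilation of the basic generating function: putting $s=mt$ in $\dfrac{2s\,e^{us+vs^{2}}}{\mu e^{s}+1}=\sum_{n\ge 0}G_{n}(u,v;\mu)\,s^{n}/n!$ gives
\[
\frac{2t\,e^{umt+vm^{2}t^{2}}}{\mu e^{mt}+1}=\sum_{n\ge 0}m^{n-1}G_{n}(u,v;\mu)\frac{t^{n}}{n!},
\]
and matching the exponent $(mx+k)t+pyt^{2}$ forces $u=x+\tfrac{k}{m}$ (because $x$ couples to the first power of $t$), $v=\tfrac{py}{m^{2}}$ (because $y$ couples to $t^{2}$) and $\mu=\lambda^{m}$. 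Inserting this back and comparing coefficients of $t^{n}/n!$ gives $G_{n}(mx,py;\lambda)=m^{n-1}\sum_{k=0}^{m-1}(-\lambda)^{k}G_{n}\bigl(x+\tfrac{k}{m},\tfrac{py}{m^{2}};\lambda^{m}\bigr)$, which is the assertion since $(-\lambda)^{k}=\lambda^{k}(-1)^{k}$; this is the exact analogue of the passage from Theorem~\ref{t1} to Corollary~\ref{co2}.

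The geometric-sum step and the coefficient comparison are routine. The one point that genuinely needs care --- and hence the main obstacle --- is the bookkeeping of the two different rescalings produced by $t\mapsto mt$: the first argument is both shifted and contracted by $m$, producing $x+k/m$, whereas the second argument couples to $t^{2}$ and is therefore contracted by $m^{2}$, producing $py/m^{2}$; and it is the single factor $t$ in the numerator $2t$ that drops the power of $m$ from $m^{n}$ to $m^{n-1}$. One should also keep in mind that, just as in Theorem~\ref{t1}, the hypothesis that $m$ be odd is indispensable.
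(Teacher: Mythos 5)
Your proposal is correct and follows essentially the same route as the paper: the geometric-sum identity for odd $m$ to pass from $\lambda$ to $\lambda^{m}$, followed by the rescaling $t\mapsto mt$, which contracts the first argument by $m$ and the second by $m^{2}$. The only cosmetic difference is that you dilate the full two-variable generating function $\dfrac{2te^{xt+yt^{2}}}{\lambda e^{t}+1}$ in one step, whereas the paper multiplies out the one-variable series against $e^{pyt^{2}}$ and matches the resulting coefficient with the series definition of ${}_{H}G_{n}(x,y,\lambda)$ --- your version quietly assumes that definition is equivalent to the generating-function one, which is exactly the identity the paper verifies en route.
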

\begin{proof}
We know
$$\sum_{n=0}^{\infty} G_{n}(mx,py,\lambda)\frac{t^{n}}{n!}=\frac{2t e^{mxt+pyt^{2}}}{\lambda e^{t}+1} $$
and handing the R. H. S of the above equations, we defined
$$\sum_{n=0}^{\infty} G_{n}(mx,py,\lambda)\frac{t^{n}}{n!}=\frac{2t e^{mxt}}{\lambda^{m} e^{mt}+1} \frac{\lambda^{m} e^{mt}+1}{\lambda e^{t}+1} e^{pyt^{2}} .$$
By noting that
\begin{equation*}
\frac{2t
e^{mxt}}{\lambda^{m}e^{mt}+1}\frac{\lambda^{m}e^{mt}+1}{\lambda
e^{t}+1}e^{pyt^{2}}=\sum_{k=0}^{m-1}\frac{1}{m}(-1)^{k}\lambda^{k}\sum_{q=0}^{\infty}\frac{t^{q}m^{q}}{q!}G_{q}\Big(x+\frac{k}{m},\lambda^{m}\Big)\sum_{r=0}^{\infty}
\frac{t^{2r}p^{r}}{r!}y^{r}.
\end{equation*}
We get
\begin{equation*}
\sum_{n=0}^{\infty}G_{n}(mx,py,\lambda)\frac{t^{n}}{n!}=\sum_{n=0}^{\infty}t^{r}m^{n-1}\sum_{k=0}^{m-1}(-1)^{k}\lambda^{k}\sum_{r=0}^{[\frac{n}{2}]}
\frac{G_{n-2r}(x+\frac{k}{m},\lambda^{m})}{(n-2r)!r!}\Big(\frac{py}{m^{2}}\Big)^{r}
\end{equation*}
Also, by simple computation we realize that
$$_{H}G_{n}(x,y,\lambda)=\sum_{s=0}^{[\frac{n}{2}]}\frac{y^{s}G_{n-2s}(x,\lambda)}{s!(n-2s)!}  $$
So, we obtain
$$G_{n}(mx,py,\lambda)=m^{n-1}\sum_{k=0}^{m-1}(-1)^{k}\lambda^{k}_{H}G_{n}\Big(x+\frac{k}{m},\frac{py}{m^{2}},\lambda^{m}\Big) $$
Therefore proof is complete.
\end{proof}
Also, by a similar method, we get the following remark.
\begin{remark}
Let $m$ be odd and $x,y\in\mathbb{R}^{+}$, we get
$$_{H}G_{n}(mx,m^{2}y,\lambda)=m^{n-1}\sum_{\ell =0}^{m-1}(-1)^{\ell}\lambda^{\ell}_{H}G_{n}\Big(x+\frac{\ell}{m},y,\lambda^{m}\Big). $$
\end{remark}

\textbf{Acknowledgments}: The authors wishes to express his
sincere gratitude to the referee for his/her valuable suggestions
and comments.


\begin{thebibliography}{99}
\bibitem{Ap} T. M. Apostol, \emph{On the Lerch Zeta function}, Pacific. J. Math. No. 1, 1951, 161-167.

\bibitem{Ca} I. N. Cangul, H. Ozden and Y. Simsek, \emph{A new approach to q-Genocchi numbers and their interpolation functions}, Nonlinear Analysis: Theory, Methods and Applications, Vol. \textbf{71}, 2009, 793-799.


\bibitem{Ch}G. S. Cheon,\emph{ A note on the Bernoulli and Euler polynomials}. Appl. Math. Lett. Vol.\textbf{ 16}, No.3, 2003, 365-368.

\bibitem{Du}D. Dumont and G. Viennot, \emph{A Combinatorial Interpretation of the Seidel Generation of Genocchi Numbers}, Annals of Discrete Mathematics, Vol. \textbf{6}, 1980, 77-87.

\bibitem{Gu}B. N. Guo and F. Qi, \emph{Generalization of Bernoulli polynomials}, J. Math. Ed.
Sci. Tech. \textbf{33}, No. 3, 2002, 428-431.

\bibitem{He}S. Herrmann, \emph{Genocchi numbers and f-vectors of simplicial balls},
European Journal of Combinatorics, Vol. \textbf{29}, Issue 5, 2008, 1087-1091.

\bibitem{Ho1}F. T. Howard,\emph{ A sequence of numbers related to the exponential function}, Duke. Math. J. \textbf{34 }, 1967, 599-616.

\bibitem{Ho2}F. T. Howard,\emph{Explicit formulas for degenerate Bernoulli numbers}, Disc. Math, Vol.\textbf{ 162}, Issue 1-3, 1996, 175-185.

\bibitem{Ho3} F. T. Howard, M. Cenkci, \emph{Notes on degenerate numbers}, Disc. Math, Vol. \textbf{307}, Issues 19-20, 2007, 2359-2375.

\bibitem{Ki1}T. Kim, \emph{On the q-extension of Euler and Genocchi numbers}
, J. Math. Anal. Appl, Vol. \textbf{326}, Issue 2, 2007, 1458-1465.

\bibitem{Ki2}2. T. Kim and S.H. Rim , \emph{Some q-Bernoulli numbers of higher order associated with the p-adic q-integrals}. Indian J. Pure. Appl. Math. \textbf{32}, 2001, 1565-1570.

\bibitem{Li1}G. D. Liu, H. M. Srivastava, \emph{Explicit formulas for the No$\ddot{u}$rland polynomials $B_{n}^{(x)}$ and $b_{n}^{(x)}$}
, Comp. Math. Appl, Vol. \textbf{51}, Issue 9-10, 2006, 1377-1384.

\bibitem{Li2}H. Liu and W. Wang, \emph{Some identities on the Bernoulli, Euler and Genocchi polynomials via power sums and alternate power sums}, Discrete Mathematics, Vol. \textbf{309}, Issue 10, 2009, 3346-3363.

\bibitem{Li3}S. D. Lin and H. M. Srivastava, \emph{Some families of the Hurwitz-Lerch Zeta function and associated fractional derivative and other integral repre-sentations}. Appl. Math. Comput, \textbf{154} , 2004, 725-733.

\bibitem{Lu1} Q. M. Luo, \emph{Some results for the q-Bernoulli and q-Euler polynomials}, J. Math. Anal. Apl, Vol. \textbf{363}, Issue 1, 2010, 7-18.

\bibitem{Lu2}Q. M. Luo,\emph{ The multiplication formulas for the Apostol-Bernoulli and Apostol-Euler polynomials of higher order}, Integral Transforms and Special Functions, Vol.\textbf{ 20}, Issue 5, 2009, 377-391.

\bibitem{Lu3}Q. M. Luo, B. N. Guo, F. Qi, and L. Debnath, \emph{Generalization of Bernoulli numbers and polynomials}, IJMMS, Vol. \textbf{2003}, Issue 59, 2003, 3769-3776.

\bibitem{Lu4}Q. M. Luo, F. Qi, and L. Debnath, \emph{Generalizations of Euler numbers and polynomials}, IJMMS. Vol. 2003, Issue 61, 2003(3893-3901)

\bibitem{Lu5}Q. M. Luo, \emph{q-Extensions for the Apostol-Genocchi Polynomials}, General Mathematics Vol. \textbf{17}, No. 2 ,2009, 113-125.

\bibitem{Lu6}Q. M. Luo and H. M. Srivastava, \emph{Some relationships between the Apostol-Bernoulli and Apostol-Euler polynomials Computers and Mathematics with Applications}, Vol. \textbf{51}, Issues 3-4, 2006, 631-642.

\bibitem{Mc}P. J. McCarthy , \emph{Some irreducibility theorems for Bernoulli polynomials of higher order}, Duke Math. J. Vol.\textbf{ 27}, No. 3 ,1960, 313-318.


\bibitem{Ri1}J. Riordan and P. R. Stein, \emph{Proof of a conjecture on Genocchi numbers}, Discrete Mathematics, Vol.\textbf{ 5}, Issue 4, 1973, 381-388.

\bibitem{Ri2}S. H. Rim,  K. H. Park and E. J. Moon, \emph{On Genocchi Numbers and Polynomials}, Abstract and Applied Analysis. Vol. \textbf{2008}.

\bibitem{Ru}B. Y. Rubinstein and L. G. Fel ,\emph{ Restricted partition functions as Bernoulli and Eulerian polynomials of higher order}, Ramanujan Journal, Vol. \textbf{11}, No. 3, 2006, 331-347.

\bibitem{Ry1}C. S. Ryoo, \emph{A numerical computation on the structure of the roots of q-extension of Genocchi polynomials}, Applied Mathematics Letters, Vol. \textbf{21}, Issue 4, 2008, 348-354.

\bibitem{Ry2}C. S. Ryoo, \emph{A numerical computation on the structure of the roots of (h,q)-extension of Genocchi polynomials}, Mathematical and Computer Modelling, Vol. \textbf{49}, Issues 3-4, 2009, 463-474.


\bibitem{Sim2}Y. Simsek, \emph{q-Hardy-Berndt type sums associated with q-Genocchi type zeta and q-l-functions},
Nonlinear Analysis: Theory, Methods and Applications, Vol. \textbf{71}, Issue 12, 2009, 377-395.

\bibitem{Sr}H. M. Srivastava, \emph{Some formulae for the q-Bernoulli and Euler polynomials of higher order}, J. Math. Anal. Appl. Vol. \textbf{273}, Issue 1, 2002, 236-242.

\bibitem{Zh}J. Zeng and J. Zhou, \emph{A q-analog of the Seidel generation of Genocchi numbers}, European Journal of Combinatorics, Vol. \textbf{27}, Issue 3, 2006, 364-381.

\bibitem{Zi}L. C. Jang, T. Kim, D. H. Lee, and D. W. Park,  \emph{An application of polylogarithms in the analogue of Genocchi numbers}, NNTDM, Vol. \textbf{7}, Issue 3, 2000,
66-70.

\bibitem{Zg}Y. Simsek, I. N. Cangul, V. Kurt, and D. Kim, \emph{q-Genocchi numbers and polynomials
associated with q-Genocchi-type l-functions}, Adv. Difference
Equ, doi:10.11555.2008/85750

\bibitem{Zk}Jeff Remmel,  \emph{Ascent Sequences, $2 + 2$-free posets, Upper Triangular Matrices, and Genocchi numbers}, Workshop on Combinatorics, Enumeration, and
Invariant Theory,George Mason University, Virginia, 2010.

\bibitem{Zl}Anders Claesson, Sergey Kitaev, Kari Ragnarsson, Bridget Eileen Tenner, \emph{Boolean complexes for Ferrers graphs}, arXiv:0808.2307v3

\bibitem{Zm}Michael Domaratzki, \emph{Combinatorial Interpretations of a Generalization of the Genocchi Numbers}, Journal of Integer Sequences, Vol. \textbf{7},
2004.

\bibitem{Zn}Qiu-Ming Luo, H.M. Srivastava, \emph{Some generalizations of the Apostol-Genocchi polynomials and the Stirling numbers of the second kind}, Appl. Math. Comput. (2011), doi:10.1016/j.amc.2010.12.048.

\bibitem{Zp}H.M. Srivastava and A. Pinter, \emph{Remarks on some relationships
between the Bernoulli and Euler polynomials}, Applied Math.
Letter. Vol. \textbf{17}, 2004, 375-380.
\bibitem{Zq}B. Kurt,\emph{ The multiplication formulas for the Genocchi polynomials of higher order}. Proc. Jangjeon Math. Soc. Vol.\textbf{ 13}, No.1, 2010, 89-96.
\bibitem{Zr}G. Dattoli, S. Lorenzutta and C. Cesarano,\emph{ Bernoulli numbers and polynomials from a more general point of view}. Rend. Mat. Appl.
Vol.\textbf{ 22}, No.7, 2002, 193-202.

\end{thebibliography}
\end{document}